\newtheorem{thm}{Theorem}[section]
\newtheorem{lem}[thm]{Lemma}
\newtheorem{prop}[thm]{Proposition}
\newtheorem{rmk}[thm]{Remark}
\newtheorem{thm-con}[thm]{Theorem-Conjecture}
\numberwithin{equation}{section}
\theoremstyle{definition}
\newcommand{\f}{\Bbb F}
\begin{document}

\title[Reversed Dickson Polynomials of the Third Kind]{Reversed Dickson polynomials of the third kind}

\author[Neranga Fernando]{Neranga Fernando}
\address{Department of Mathematics,
Northeastern University, Boston, MA 02115}
\email{w.fernando@neu.edu}

\begin{abstract}
Let $p$ be a prime and $q=p^e$. We discuss the properties of the reversed Dickson polynomial $D_{n,2}(1,x)$ of the third kind. We also give several necessary conditions for the reversed Dickson polynomial of the third kind $D_{n,2}(1,x)$ to be a permutation of $\f_{q}$. In particular, we give explicit evaluation of the sum $\sum_{a\in \f_q}D_{n,2}(1,a)$. 
\end{abstract}

\keywords{Finite field, Permutation polynomial, Reversed Dickson polynomial}

\subjclass[2010]{11T06, 11T55, 11C08}

\maketitle

\section{Introduction}
Let $p$ be a prime and $q$ a power of $p$. Let $\Bbb F_q$ be the finite field with $q$ elements. A polynomial $f \in \Bbb F_q[{\tt x}]$ is called a \textit{permutation polynomial} (PP) of $\Bbb F_q$ if the mapping $x\mapsto f(x)$ is a permutation of $\Bbb F_q$. In the study of permutation polynomials over finite fields, Dickson polynomials have played a pivotal role. 

The $n$-th Dickson polynomial of the first kind $D_n(x,a)$ is defined by
\[
D_{n}(x,a) = \sum_{i=0}^{\lfloor\frac n2\rfloor}\frac{n}{n-i}\dbinom{n-i}{i}(-a)^{i}x^{n-2i},
\]
where $a\in \f_q$ is a parameter. 

The permutation property of the Dickson polynomials of the first kind is completely known. When $a=0$, $D_{n}(x,a) = x^n$, which is a PP over $\Bbb F_q$ if and only if $(n,q-1)=1$. When $0\neq a \in \mathbb{F}_{q}$, $D_{n}(x,a)$ is a PP over $\Bbb F_q$ if and only if $(n,q^2-1)=1$; see \cite[Theorem ~7.16]{Lidl-Niederreiter-97} or \cite[Theorem ~3.2]{Lidl-Mullen-Turnwald-1993}.

The $n$-th Dickson polynomial of the second kind $E_n(x,a)$ is defined by
\[
E_{n}(x,a) = \sum_{i=0}^{\lfloor\frac n2\rfloor}\dbinom{n-i}{i}(-a)^{i}x^{n-2i},
\]
where $a\in \f_q$ is a parameter. 

The permutation behavior of the Dickson polynomials of the second kind  has been extensively studied by many authors. We refer the reader to \cite{Cohen-1994} for more details about the Dickson polynomials of the second kind.

Dickson polynomials are closely related to the well-known Chebyshev polynomials over the complex numbers by
$$D_n(2x,1)=2T_n(x)\hspace{1cm}\textnormal{and}\hspace{1cm}E_n(2x,1)=U_n(x),$$

where $T_n(x)$ and $U_n(x)$ are Chebyshev polynonmials of degree $n$ of the first kind and the second kind, respectively. 

The $n$-th reversed Dickson polynomial of the first kind $D_{n}(a,x)$ was first introduced by Hou, Mullen, Sellers and Yucas in \cite{Hou-Mullen-Sellers-Yucas-FFA-2009} by reversing the roles of the variable and the parameter in the $n$-th Dickson polynomial of the first kind $D_{n}(x,a)$. It was shown that when $a=0$, $D_{n}(0,x)$ is a PP over $\Bbb F_q$ if and only if $n = 2k$ with $(k,q-1)=1$. Also, when $a\neq0$, 
\[
D_{n}(a,x) = a^n D_{n}(1,\frac{x}{a^2}).
\]
Hence $D_{n}(a,x)$ is a PP on $\mathbb{F}_{q}$ if and only if $D_{n}(1,x)$ is a PP on $\mathbb{F}_{q}$. 

In \cite{Hou-Ly-FFA-2010}, Hou and Ly further studied the reversed Dickson polynomials of the first kind $D_{n}(1,x)$ and explained several necessary conditions for $D_{n}(1,x)$ to be a permutation of $\f_q$. 

Recently, Hong, Qin, and Zhao studied reversed Dickson polynomials of the second kind in \cite{Hong-Qin-Zhao-FFA-2016}. They presented several necessary conditions for the reversed Dickson polynomial of the second kind $E_n(1,x)$ to be a permutation of $\f_q$. 

In \cite{Wang-Yucas-FFA-2012}, Wang and Yucas introduced the $n$-th Dickson polynomial of the $(k+1)$-th kind and the $n$-th reversed Dickson polynomial of the $(k+1)$-th kind.

For $a\in \f_q$, the $n$-th Dickson polynomial of the $(k+1)$-th kind $D_{n,k}(x,a)$ is defined by 
\[
D_{n,k}(x,a) = \sum_{i=0}^{\lfloor\frac n2\rfloor}\frac{n-ki}{n-i}\dbinom{n-i}{i}(-a)^{i}x^{n-2i}.
\]

For $a\in \f_q$, the $n$-th reversed Dickson polynomial of the $(k+1)$-th kind $D_{n,k}(a,x)$ is defined by 
\begin{equation}\label{E1.1}
D_{n,k}(a,x) = \sum_{i=0}^{\lfloor\frac n2\rfloor}\frac{n-ki}{n-i}\dbinom{n-i}{i}(-x)^{i}a^{n-2i}.
\end{equation}

Then clearly $D_{n,0}(a,x)=D_{n}(a,x)$ and $D_{n,1}(a,x)=E_{n}(a,x)$. In \cite{Wang-Yucas-FFA-2012}, they defined

\begin{equation}\label{E1.2}
D_{0,k}(a,x)=2-k.
\end{equation}

They also showed that $D_{n,k}(x,a)=kE_{n}(x,a)-(k-1)D_{n}(x,a)$. A simple computation shows that the reversed Dickson polynomials agree with the above equation as well, i.e. 
\begin{equation}\label{E1.3}
D_{n,k}(a,x)=kE_{n}(a,x)-(k-1)D_{n}(a,x).
\end{equation}

In \cite{Wang-Yucas-FFA-2012}, Wang and Yucas completely described the permutation behavior of the Dickson polynomials of the third kind $D_{n,2}(x,a)$ over any prime field, but the permutation property of $D_{n,2}(x,a)$ over an arbitrary finite field is still an open problem. 

The purpose of the present paper is to explore the permutation behavior of the reversed Dickson polynomials of the third kind. By \eqref{E1.1}, the $n$-th reversed Dickson polynomial of the third kind $D_{n,2}(a,x)$ is given by 

\begin{equation}\label{E1.4}
D_{n,2}(a,x) = \sum_{i=0}^{\lfloor\frac n2\rfloor}\frac{n-2i}{n-i}\dbinom{n-i}{i}(-x)^{i}a^{n-2i}.
\end{equation}

Throughout the paper, we denote the $n$-th reversed Dickson polynomial of the third kind $D_{n,2}(a,x)$ by $F_{n}(a,x)$. Here is an overview of the paper. 

In Section 2, we present several properties of the reversed Dickson polynomials of the third kind. In section 3, we survey some miscellaneous necessary conditions for $F_{n}(a,x)$ to be a permutation of $\f_q$. In section 4, we compute the sum  $\sum_{a\in \f_q}F_n(1,a)$.


\section{Reversed Dickson polynomials of the third kind}
We study the properties of reversed Dickson polynomials of the third kind in this section. 

\begin{lem}\label{L2.1} $F_n (a,x)$ is not a PP when $a=0$.
\end{lem}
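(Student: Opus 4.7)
The plan is to show that the polynomial $F_n(0,x)$ is identically the constant $0$ on $\mathbb{F}_q$, which trivially prevents it from being a bijection of $\mathbb{F}_q$.

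To carry this out, I would substitute $a=0$ directly into the defining formula \eqref{E1.4}:
\[
F_n(0,x) = \sum_{i=0}^{\lfloor n/2 \rfloor}\frac{n-2i}{n-i}\binom{n-i}{i}(-x)^{i}\,0^{n-2i},
\]
and argue term-by-term. For any index $i$ with $i<n/2$, the exponent $n-2i$ is a positive integer, so $0^{n-2i}=0$ and the corresponding term vanishes. The only remaining index is $i=n/2$, which requires $n$ even and $n\ge 2$; here $0^{n-2i}=0^{0}=1$, but the factor $n-2i$ in the numerator of the coefficient is now $0$, so this term also vanishes. For the boundary case $n=0$, the convention \eqref{E1.2} gives $F_0(a,x)=D_{0,2}(a,x)=2-2=0$ directly.

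Combining these observations yields $F_n(0,x)\equiv 0$ on $\mathbb{F}_q$ for every $n\ge 0$. Since a constant function is not a bijection of $\mathbb{F}_q$ when $q\ge 2$, we conclude that $F_n(a,x)$ is not a permutation polynomial of $\mathbb{F}_q$ in the case $a=0$.

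There is no substantive obstacle in this proof; it is purely a routine substitution. The only small care required is separating the index $i=n/2$ (which exists exactly when $n$ is even) from the indices $i<n/2$, and invoking \eqref{E1.2} for the degenerate case $n=0$ where the sum in \eqref{E1.4} is empty.
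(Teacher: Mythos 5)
Your proof is correct, but it takes a different route from the paper. The paper does not touch the defining sum \eqref{E1.4} at all: it quotes the known evaluations $D_n(0,x)=0$ or $2(-x)^k$ and $E_n(0,x)=0$ or $(-x)^k$ (for $n$ odd, $n=2k$ respectively) from \cite{Hou-Mullen-Sellers-Yucas-FFA-2009} and \cite{Hong-Qin-Zhao-FFA-2016}, and then observes via the linear relation \eqref{E1.3} that $F_n(0,x)=2E_n(0,x)-D_n(0,x)=2(-x)^k-2(-x)^k=0$. Your argument instead substitutes $a=0$ directly into \eqref{E1.4} and kills each term: $0^{n-2i}=0$ when $i<n/2$, and the coefficient factor $n-2i$ vanishes at $i=n/2$, with \eqref{E1.2} handling $n=0$ (where the $i=0$ coefficient $\frac{n-2i}{n-i}$ is formally indeterminate, so invoking the convention is the right move). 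Your version is more self-contained — it needs no imported formulas for the first and second kinds — at the cost of the small case analysis at $i=n/2$ and $n=0$; the paper's version is shorter given the cited results and makes the structural cancellation between $2E_n$ and $D_n$ visible, which is in the spirit of how the third kind is treated throughout via \eqref{E1.3}. Both correctly conclude that $F_n(0,x)\equiv 0$ is constant, hence not a permutation of $\f_q$.
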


\begin{proof}

When $a=0$, the reversed Dickson polynomials of the first kind satisfy (See \cite{Hou-Mullen-Sellers-Yucas-FFA-2009})
\[
D_n (0,x)=
\begin{cases}
0&\text{if}\ n\,\, \text{is odd},\cr
2\,(-x)^k&\text{if}\ n = 2k,
\end{cases}
\]
and the reversed Dickson polynomials of the second kind satisfy (See \cite{Hong-Qin-Zhao-FFA-2016})
\[
E_n (0,x)=
\begin{cases}
0&\text{if}\ n\,\, \text{is odd},\cr
(-x)^k&\text{if}\ n = 2k.
\end{cases}
\]
From \eqref{E1.3}, we have $F_n(0,x)=2E_n(0,x)-D_n(0,x)$ which implies $F_n (0,x)=0$ for all $n$. Hence $F_n (a,x)$ is not a PP when $a=0$.

\end{proof}

We thus hereafter assume that $a\in \f_{q}^{*}$. 

\begin{lem}\label{L2.2}
For $a\ne 0$, Let $x=y+ay^{-1}$ for some $y\in \f_{q^2}$ with $y\neq 0$ and $y^2\neq a$. Then the functional equation of $F_n(a,x)$ is given by 
$$F_n(a,x)=\displaystyle\frac{a}{2y-a}(y^n-(a-y)^n),\, \textnormal{where}\,\, y\neq \frac{a}{2}.$$
\end{lem}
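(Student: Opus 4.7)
My plan is to derive the functional equation from \eqref{E1.3}, which gives $F_n(a,x) = 2E_n(a,x) - D_n(a,x)$. In the standard parametrization that makes $y$ and $a-y$ the two roots of $T^{2} - aT + x = 0$ (equivalently $x = y(a-y)$, so that $y = a/2$ is exactly the degeneracy we must exclude), the results of \cite{Hou-Mullen-Sellers-Yucas-FFA-2009} and \cite{Hong-Qin-Zhao-FFA-2016} furnish the functional equations for the reversed Dickson polynomials of the first and second kinds:
\[
D_n(a,x) = y^n + (a-y)^n, \qquad E_n(a,x) = \frac{y^{n+1} - (a-y)^{n+1}}{2y - a},
\]
valid as long as $2y - a \neq 0$.

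After substituting these two expressions into \eqref{E1.3} and placing everything over the common denominator $2y - a$, the task reduces to the polynomial identity
\[
2y^{n+1} - 2(a-y)^{n+1} - (2y - a)\bigl[y^n + (a-y)^n\bigr] = a\bigl[y^n - (a-y)^n\bigr].
\]
I will verify this by splitting the left side into a $y$-piece and an $(a-y)$-piece. The $y$-piece collapses immediately: $2y^{n+1} - (2y - a)y^n = ay^n$. For the $(a-y)$-piece I will rewrite $-(2y - a) = (a-y) - y$ and factor out $(a-y)^n$, obtaining
\[
-2(a-y)^{n+1} + \bigl[(a-y) - y\bigr](a-y)^n = -(a-y)^{n+1} - y(a-y)^n = -a(a-y)^n,
\]
where the last step uses $(a-y) + y = a$. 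Summing the two pieces yields exactly $a\bigl[y^n - (a-y)^n\bigr]$, and dividing by $2y - a$ produces the formula in the statement.

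There is no genuine obstacle, since the argument is essentially a linear combination of two already-known functional equations. The only step requiring a little care is the splitting $a - 2y = (a-y) - y$, which is what creates the cancellation that kills the $(a-y)^{n+1}$ term; once this is spotted, everything else is routine polynomial algebra, valid in every characteristic.
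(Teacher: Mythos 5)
Your proposal is correct and follows essentially the same route as the paper: both obtain the formula by combining the known functional equations $D_n(a,x)=y^n+(a-y)^n$ and $E_n(a,x)=\bigl(y^{n+1}-(a-y)^{n+1}\bigr)/(2y-a)$ through the identity $F_n(a,x)=2E_n(a,x)-D_n(a,x)$ from \eqref{E1.3}, with the paper simply declaring the algebra immediate while you spell out the (correct) cancellation via $a-2y=(a-y)-y$. You also rightly work with the parametrization $x=y(a-y)$, i.e.\ $y$ and $a-y$ roots of $T^2-aT+x=0$, which is what the functional equations require and what the paper implicitly uses despite the statement's stray ``$x=y+ay^{-1}$''.
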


\begin{proof}
Note that

\[
D_{n}(a,x)=y^n+(a-y)^n
\]

and 

\[
E_n(a,x)=\displaystyle\frac{y^{n+1}-(a-y)^{n+1}}{2y-a}
\]

are the functional expressions of the Dickson polynomial of the first kind and second kind, respectively. 
Hence the rest of the proof immediately follows from \eqref{E1.3}. 
\end{proof}

Let  $a\in \f_{q}^{*}$. Then it follows from \eqref{E1.4} that
\begin{equation}\label{E2.1}
F_{n}(a,x)=a^n\,F_{n}(1,\frac{x}{a^2}). 
\end{equation}
Hence $F_{n}(a,x)$ is a PP on $\f_{q}$ if and only if $F_{n}(1,x)$ is a PP on $\f_{q}$.

\begin{thm}\label{T2.2}
let $p$ be an odd prime, $n$ and $k$ be positive integers. Then we have the following. 
\begin{itemize}
\item [(1)] If $y\neq \frac{1}{2}$, then $F_n(1,y(1-y))=\dfrac{y^n-(1-y)^n}{2y-1}$. Also, $F_{n}(1,\frac{1}{4})=\frac{n}{2^{n-1}}$. 
\item [(2)] If $\text{gcd}(n,k)=1$, then $F_{np^k}(1,x)=(F_n(1,x))^{p^k}(1-4x)^{\frac{p^k-1}{2}}$.
\item [(3)] If $n_1\equiv n_2 \pmod{q^2-1}$, then $F_{n_1}(1,x_0)=F_{n_2}(1,x_0)$ for any $x_0\in \f_{q}\setminus \{\frac{1}{4}\}$.
\end{itemize}
\end{thm}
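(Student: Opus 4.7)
\noindent The plan is as follows. For part (1), substitute the functional expressions of Lemma~\ref{L2.2} (with $a=1$ and $x=y(1-y)$) into relation \eqref{E1.3}, obtaining
\[
F_n(1,x)=\frac{2\bigl(y^{n+1}-(1-y)^{n+1}\bigr)}{2y-1}-\bigl(y^n+(1-y)^n\bigr).
\]
After clearing the common denominator $2y-1$, the $2y^{n+1}$ terms cancel and the remaining numerator regroups as $(1-y)^n[-2(1-y)-2y+1]+y^n=-(1-y)^n+y^n$, giving the claimed identity. For $F_n(1,1/4)$, note that $y(1-y)=1/4$ forces $y=1/2$, where both numerator and denominator vanish. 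Substituting $u=2y-1$, so that $y=(1+u)/2$ and $1-y=(1-u)/2$, yields
\[
\frac{y^n-(1-y)^n}{2y-1}=\frac{1}{2^{n-1}}\sum_{\substack{1\le j\le n\\ j\text{ odd}}}\binom{n}{j}u^{j-1},
\]
and since $u^2=(2y-1)^2=1-4y(1-y)=1-4x$, evaluating at $x=1/4$ kills every term except $j=1$, giving $n/2^{n-1}$.

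For part (2), the Frobenius identity $(y^n-(1-y)^n)^{p^k}=y^{np^k}-(1-y)^{np^k}$ in characteristic $p$, combined with part (1), gives
\[
F_{np^k}(1,y(1-y))=\frac{\bigl(y^n-(1-y)^n\bigr)^{p^k}}{2y-1}=\bigl(F_n(1,y(1-y))\bigr)^{p^k}\cdot(2y-1)^{p^k-1}.
\]
Since $p$ is odd, $p^k-1$ is even, so $(2y-1)^{p^k-1}=\bigl((2y-1)^2\bigr)^{(p^k-1)/2}=(1-4x)^{(p^k-1)/2}$. This is an identity of rational functions in $y$; since the map $y\mapsto y(1-y)$ is surjective onto the algebraic closure of $\f_q$, it descends to the claimed polynomial identity in $x$. (The argument does not appear to need the gcd hypothesis.)

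For part (3), given $x_0\in\f_q\setminus\{1/4\}$, the equation $T^2-T+x_0=0$ has two distinct roots $y$ and $1-y$ lying in $\f_{q^2}$. If moreover $x_0\ne 0$, then both $y$ and $1-y$ are nonzero, hence in $\f_{q^2}^*$, so $y^{q^2-1}=(1-y)^{q^2-1}=1$. The hypothesis $n_1\equiv n_2\pmod{q^2-1}$ then forces $y^{n_1}=y^{n_2}$ and $(1-y)^{n_1}=(1-y)^{n_2}$, and plugging into the functional equation of part (1) yields $F_{n_1}(1,x_0)=F_{n_2}(1,x_0)$. The boundary case $x_0=0$ follows directly from \eqref{E1.4}: only the $i=0$ term survives, giving $F_n(1,0)=1$ for every $n\ge 1$.

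The main obstacle is the somewhat delicate algebraic bookkeeping in part (1)—tracking signs carefully when combining $2E_n-D_n$ over a common denominator—together with verifying that the rational identities in $y$ derived in parts (2) and (3) specialize correctly to polynomial identities over the intended domain. Once these bookkeeping issues are handled, the substantive content is Frobenius in part (2) and the subfield containment $y\in\f_{q^2}^*$ in part (3).
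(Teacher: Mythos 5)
Your proof is correct, and parts (2) and (3) follow essentially the paper's route: the Frobenius identity $(y^n-(1-y)^n)^{p^k}=y^{np^k}-(1-y)^{np^k}$ applied to the functional equation for (2), and $y_0^{q^2-1}=(1-y_0)^{q^2-1}=1$ for (3). Your observation that the hypothesis $\gcd(n,k)=1$ is never used is accurate: the paper's proof of (2) does not invoke it either, so it is superfluous in both treatments. You are in fact slightly more careful than the paper in (3), where you separately handle $x_0=0$ (there $y_0\in\{0,1\}$ is not a unit, so $y_0^{q^2-1}=1$ fails; your fallback to $F_n(1,0)=1$ via \eqref{E1.4}, or simply $0^{n_1}=0^{n_2}$ for positive $n_1,n_2$, closes a case the paper passes over silently). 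The one genuinely different ingredient is your evaluation of $F_n(1,\frac{1}{4})$. The paper gets it by specialization: taking $a=2$, $x=1$ in \eqref{E2.1} gives $F_n(1,\frac14)=F_n(2,1)/2^n$, and the known values $D_n(2,1)=2$, $E_n(2,1)=n+1$ combined with \eqref{E1.3} yield $F_n(2,1)=2n$, hence $n/2^{n-1}$ with no limiting argument at all. You instead substitute $u=2y-1$ and expand
\[
\frac{y^n-(1-y)^n}{2y-1}=\frac{1}{2^{n-1}}\sum_{\substack{1\le j\le n\\ j\ \text{odd}}}\binom{n}{j}\,u^{j-1},
\]
a polynomial in $u^2=1-4x$, and then extend the identity from $x\neq\frac14$ to $x=\frac14$ by the fact that two polynomials agreeing on the infinitely many points of $\overline{\f}_q\setminus\{\frac14\}$ coincide. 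This is exactly the computation the paper defers to Proposition~\ref{P3.2} (the representation $F_n(1,x)=(\frac12)^{n-1}f_n(1-4x)$ with $f_n(0)=n$), so your route proves a stronger structural fact en passant, at the cost of needing the polynomial-identity extension argument that the paper's trick avoids; do make that extension step explicit rather than just evaluating at $u=0$, since the raw functional equation is only valid for $y\neq\frac12$. The same remark applies to your treatment of $x=\frac14$ in (2), where the paper instead checks directly that $F_{np^k}(1,\frac14)=np^k/2^{np^k-1}=0$ because $p\mid np^k$ — both are fine.
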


\begin{proof}

\begin{itemize}
\item [(1)] Let $a=1$ in Lemma~\ref{L2.2}, and write $x=y(1-y)$. Then for $y\neq \frac{1}{2}$, we have 
$$F_n(1,y(1-y))=\dfrac{y^n-(1-y)^n}{2y-1}.$$
When $a=2$ and $x=1$, from \eqref{E2.1} we have  
$$F_{n}(2,1)=2^n\,F_{n}(1,\frac{1}{4}).$$
which implies
$$F_{n}(1,\frac{1}{4})=\displaystyle\frac{F_{n}(2,1)}{2^n}.$$
We have $D_n(2,1)=2$ and $E_n(2,1)=n+1$ (See \cite{Lidl-Mullen-Turnwald-1993}). Then it follows from \eqref{E1.3} that 
$$F_n(2,1)=2E_n(2,1)-D_n(2,1)=2(n+1)-2=2n.$$
Hence
$$F_{n}(1,\frac{1}{4})=\displaystyle\frac{n}{2^{n-1}}.$$

\item [(2)] Let $x=y(1-y)$ with $y\neq \frac{1}{2}$. Then we have
\[
\begin{split}
&F_{np^k}(1,x)=F_{np^k}(1,y(1-y))= \displaystyle\frac{y^{np^k}-(1-y)^{np^k}}{2y-1}\cr
&=\displaystyle\frac{(y^{n}-(1-y)^{n})^{p^k}}{2y-1}\cr
&=\displaystyle\frac{(y^{n}-(1-y)^{n})^{p^k}}{(2y-1)^{p^k}}\,\,(2y-1)^{p^k-1}\cr
&=(\displaystyle\frac{y^{n}-(1-y)^{n}}{2y-1} )^{p^k}\,\,(2y-1)^{p^k-1}\cr
&=(F_n(1,y(1-y)))^{p^k}\,\,(2y-1)^{p^k-1}\cr
&=F_n(1,x)^{p^k}\,\,(2y-1)^{p^k-1} \cr
&=F_n(1,x)^{p^k}\,\,(1-4x)^{\frac{p^k-1}{2}}.
\end{split}
\]
 If $y=\frac{1}{2}$, then 
$$F_{np^k}(1,\frac{1}{4})= \frac{np^k}{2^{np^k}-1}=0=F_{n}(1,x)^{p^k}(1-4x)^{\frac{p^k-1}{2}}.$$

\item [(3)]  For $x_0\in \f_{q}\setminus \{\frac{1}{4}\}$, there exists $y_0\in \f_{q^2}\setminus \{\frac{1}{2}\}$ such that $x_0=y_0(1-y_0)$.
Then we have
\[
\begin{split}
F_{n_1}(1,x_0)&=\dfrac{y_0^{n_1}-(1-y_0)^{n_1}}{2y_0-1}\cr
&=\dfrac{y_0^{n_2}-(1-y_0)^{n_2}}{2y_0-1}\cr
&=F_{n_2}(1,x_0).
\end{split}
\]

\end{itemize}

\end{proof}

\begin{rmk}
If $char(\f_q)=2$, then $F_n(1,x)$ is the $n$-th reversed Dickson polynomial of the first kind $D_{n}(1,x)$ since from (1) in Theorem~\ref{T2.2} we have
$$F_n(1,x(1-x))=x^n+(1-x)^n=D_{n}(1,x(1-x)).$$
We thus hereafter always assume, unless specified, in this paper that $p$ is odd. 
\end{rmk}

\begin{prop}Let $p$ be an odd prime and $n$ be a non-negative integer. Then 
$$F_0(1,x)=0,\,\,F_1(1,x)=1,\,\,\textnormal{and}$$
$$F_n(1,x)=F_{n-1}(1,x)-x\,F_{n-2}(1,x),\,\, \textnormal{for} \,\,n\geq 2.$$
\end{prop}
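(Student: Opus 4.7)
The plan is to reduce the statement to two already-available ingredients: the linear identity \eqref{E1.3} relating $F_n$ to $D_n$ and $E_n$, and the standard three-term recurrences satisfied by the reversed Dickson polynomials of the first and second kinds, namely
\[
D_n(1,x)=D_{n-1}(1,x)-x\,D_{n-2}(1,x),\qquad E_n(1,x)=E_{n-1}(1,x)-x\,E_{n-2}(1,x),
\]
with $D_0(1,x)=2$, $D_1(1,x)=1$, $E_0(1,x)=1$, $E_1(1,x)=1$. These recurrences are classical (they follow, for instance, by collecting coefficients in the defining sums or by comparing the functional expressions $D_n=y^n+(1-y)^n$ and $E_n=(y^{n+1}-(1-y)^{n+1})/(2y-1)$ recorded in Lemma~\ref{L2.2}).

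First I would verify the two base cases. From \eqref{E1.2} with $k=2$ we immediately get $F_0(1,x)=D_{0,2}(1,x)=2-2=0$, and from \eqref{E1.4} only the $i=0$ summand contributes when $n=1$, giving $F_1(1,x)=\frac{1}{1}\binom{1}{0}\,1=1$. (These values are also consistent with $F_n(1,x)=2E_n(1,x)-D_n(1,x)$ evaluated at $n=0,1$.)

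For the recurrence, I would take the linear combination of the $D_n$ and $E_n$ recurrences dictated by \eqref{E1.3}. Concretely,
\[
F_n(1,x) \;=\; 2E_n(1,x)-D_n(1,x)
\;=\; 2\bigl(E_{n-1}(1,x)-xE_{n-2}(1,x)\bigr)-\bigl(D_{n-1}(1,x)-xD_{n-2}(1,x)\bigr),
\]
and regrouping gives $F_{n-1}(1,x)-x\,F_{n-2}(1,x)$, as required.

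No step here is really the main obstacle; the only mild subtlety is justifying the $D_n$ and $E_n$ recurrences if one wishes the argument to be self-contained. If that is desired, I would instead use Lemma~\ref{L2.2}: with $x=y(1-y)$ and $y\neq 1/2$, a direct manipulation
\[
y^{n-1}-(1-y)^{n-1}-y(1-y)\bigl(y^{n-2}-(1-y)^{n-2}\bigr)
= y^{n-1}\bigl(1-(1-y)\bigr)+(1-y)^{n-1}(y-1)
= y^n-(1-y)^n
\]
shows, after dividing by $2y-1$, that $F_{n-1}(1,x)-x F_{n-2}(1,x)=F_n(1,x)$ for all $x\neq 1/4$; since both sides are polynomials in $x$, the identity extends to all of $\f_q$ (and to the polynomial ring). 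Either route yields the proposition in a few lines.
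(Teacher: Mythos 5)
Your proposal is correct, and your primary argument takes a genuinely different route from the paper's. The paper proves the recurrence by direct evaluation: for $x\neq\tfrac{1}{4}$ it writes $x=y(1-y)$ with $y\neq\tfrac{1}{2}$ and carries out exactly the computation you give as your fallback, and it then handles the leftover point $x=\tfrac{1}{4}$ by a separate check using the special value $F_n(1,\tfrac{1}{4})=\tfrac{n}{2^{n-1}}$ from Theorem~\ref{T2.2}(1) (the base cases $F_0(1,x)=0$ and $F_1(1,x)=1$ are likewise read off from Theorem~\ref{T2.2}(1)). You instead observe that by \eqref{E1.3} the sequence $F_n=2E_n-D_n$ is a fixed linear combination of two sequences satisfying the same three-term recurrence, so $F_n$ satisfies it too. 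This is cleaner --- no case split at $x=\tfrac{1}{4}$ --- and, provided the recurrences for $D_n(1,x)$ and $E_n(1,x)$ are taken as polynomial identities (they hold over $\mathbb{Z}$ and hence in $\f_q[x]$), it actually delivers the identity in the polynomial ring, which is formally stronger than the paper's pointwise verification on $\f_q$. The price is the two imported recurrences, which you correctly flag and know how to justify from the functional expressions in Lemma~\ref{L2.2}; your base cases via \eqref{E1.2} and \eqref{E1.4} are fine, and using \eqref{E1.2} for $n=0$ is the right move, since the factor $\frac{n-2i}{n-i}$ in \eqref{E1.4} is ill-defined there --- that is exactly why the paper adopts \eqref{E1.2} as a definition.

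One genuine (though easily repaired) flaw in your fallback route: from the identity holding at all $x\in\f_q\setminus\{\tfrac{1}{4}\}$ you cannot conclude, as you state, that ``since both sides are polynomials in $x$, the identity extends to all of $\f_q$ (and to the polynomial ring).'' For a fixed $q$, two polynomials of degree roughly $n/2$ --- and $n$ may vastly exceed $q$ --- can agree on the $q-1$ points of $\f_q\setminus\{\tfrac{1}{4}\}$ yet differ at $\tfrac{1}{4}$, let alone differ as elements of $\f_q[x]$. Repair it either as the paper does, by the direct check $\frac{n-1}{2^{n-2}}-\frac{1}{4}\cdot\frac{n-2}{2^{n-3}}=\frac{n}{2^{n-1}}=F_n(1,\tfrac{1}{4})$, or by noting that your manipulation with $x=y(1-y)$ is valid for every $x\neq\tfrac{1}{4}$ in every extension $\f_{q^m}$, so the difference of the two sides vanishes at infinitely many points of the algebraic closure and is therefore the zero polynomial. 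With either repair, both of your routes are complete.
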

\begin{proof}
It follows from Theorem~\ref{T2.2} (1) that $F_0(1,x)=0,\,\,F_1(1,x)=1$. 

Let $n\geq 2$. When $x\neq \frac{1}{4}$, we write $x=y(1-y)$ with $y\neq \frac{1}{2}$. By Theorem~\ref{T2.2} (1), we have 
\[
\begin{split}
F_{n-1}(1,x)-x\,F_{n-2}(1,x)&= F_{n-1}(1,y(1-y))\,-\,y(1-y)\,F_{n-2}(1,y(1-y))\cr
&=\dfrac{y^{n-1}-(1-y)^{n-1}}{2y-1}\,-\,y(1-y)\,\dfrac{y^{n-2}-(1-y)^{n-2}}{2y-1}\cr
&=\dfrac{y^n-(1-y)^n}{2y-1}=F_n(1,y(1-y))=F_n(1,x).
\end{split}
\]
When $x=\frac{1}{4}$, 
$$F_{n-1}(1,\frac{1}{4})-\frac{1}{4}\,F_{n-2}(1,\frac{1}{4})=\frac{n-1}{2^{n-2}}-\frac{1}{4}\frac{n-2}{2^{n-3}}=\frac{n}{2^{n-1}}=F_n(1,\frac{1}{4}).$$
\end{proof}

\begin{thm}
Let $p$ be an odd prime. $q=p^e,\,e,k\in \mathbb{Z}^+$, $1\leq k\leq e$. Then 
$F_{p^k}(1,x)$ is a PP of $\f_{q}$ if and only if $\Big( \frac{p^k-1}{2}, q-1\Big)=1.$
\end{thm}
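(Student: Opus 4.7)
The plan is to collapse $F_{p^k}(1,x)$ to an explicit monomial composed with an affine change of variable, and then invoke the classical criterion that $y \mapsto y^m$ permutes $\f_q$ if and only if $\gcd(m,q-1)=1$.

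Concretely, I would apply Theorem~\ref{T2.2}(2) with $n=1$. The hypothesis $\gcd(n,k)=1$ holds trivially, and the preceding proposition gives $F_1(1,x)=1$, so the identity becomes
$$F_{p^k}(1,x) \;=\; (1-4x)^{(p^k-1)/2},$$
where the exponent is a nonnegative integer because $p$ is odd. From this closed form the argument is immediate: since $p$ is odd, the affine map $x \mapsto 1-4x$ is a bijection of $\f_q$, so $F_{p^k}(1,x)$ permutes $\f_q$ if and only if $y \mapsto y^{(p^k-1)/2}$ permutes $\f_q$. The monomial criterion then gives the stated condition $\gcd\bigl((p^k-1)/2,\,q-1\bigr)=1$.

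There is no substantive obstacle in this argument; the entire theorem is a one-line corollary of Theorem~\ref{T2.2}(2) together with the monomial-permutation criterion. The only routine points worth noting are that $(p^k-1)/2$ is a genuine integer and that $1-4x$ is invertible on $\f_q$, both of which follow from the standing assumption that $p$ is odd. In effect, all of the content of the theorem has already been absorbed into Theorem~\ref{T2.2}(2), where the Frobenius identity in characteristic $p$ does the real work.
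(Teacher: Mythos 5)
Your proposal is correct and follows exactly the paper's own argument: specializing Theorem~\ref{T2.2}(2) to $n=1$, using $F_1(1,x)=1$ to obtain $F_{p^k}(1,x)=(1-4x)^{(p^k-1)/2}$, and concluding via the bijectivity of $x\mapsto 1-4x$ together with the monomial criterion $\gcd(m,q-1)=1$. You have merely made explicit the final routine steps that the paper compresses into ``Hence the proof.''
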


\begin{proof}
Let $n=1$ in Theorem~\ref{T2.2} (2). Since $F_1(1,x)=1$, we have
$F_{p^k}(1,x)=(F_1(1,x))^{p^k}(1-4x)^{\frac{p^k-1}{2}}=(1-4x)^{\frac{p^k-1}{2}}$.
Hence the proof. 
\end{proof}

\begin{thm}
Let $p$ be an odd prime. $q=p^e,\,e,k\in \mathbb{Z}^+$, $1\leq k\leq e$. Then 
$F_{2\cdot p^k}(1,x)$ is a PP of $\f_{q}$ if and only if $\Big( \frac{p^k-1}{2}, q-1\Big)=1.$
\end{thm}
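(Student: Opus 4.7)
The plan is to compute $F_{2p^k}(1,x)$ in closed form and then invoke the standard criterion for power maps to permute $\f_q$. The argument will mirror that of the preceding theorem (the $n=1$ case), made possible by the coincidence that $F_2(1,x)=1$ identically.

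First I would establish $F_2(1,x)=1$. The cleanest route is the functional equation of Lemma~\ref{L2.2}: for any $y\neq 1/2$,
$$F_2(1,\,y(1-y)) \;=\; \frac{y^{2}-(1-y)^{2}}{2y-1} \;=\; \frac{2y-1}{2y-1} \;=\; 1.$$
Since the set $\{y(1-y):y\in\f_{q^2},\,y\neq 1/2\}$ is infinite, the polynomial $F_2(1,x)-1$ has infinitely many roots and must vanish identically. (The same conclusion also drops out of the recurrence $F_n=F_{n-1}-xF_{n-2}$ at $n=2$, or by direct evaluation of \eqref{E1.4}.)

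Next I would substitute this into Theorem~\ref{T2.2}(2) at $n=2$, obtaining
$$F_{2p^k}(1,x) \;=\; \bigl(F_2(1,x)\bigr)^{p^k}(1-4x)^{\frac{p^k-1}{2}} \;=\; (1-4x)^{\frac{p^k-1}{2}}.$$
Because $p$ is odd, $x\mapsto 1-4x$ is an affine permutation of $\f_q$, so $F_{2p^k}(1,x)$ permutes $\f_q$ if and only if the power map $t\mapsto t^{(p^k-1)/2}$ does. The latter is classical: it holds precisely when $\gcd\!\bigl(\tfrac{p^k-1}{2},\,q-1\bigr)=1$.

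There is essentially no obstacle in this argument; the only concern is whether Theorem~\ref{T2.2}(2) legitimately applies at $n=2$ regardless of $\gcd(2,k)$. Inspecting its proof, the Frobenius manipulation used there does not actually rely on any coprimality hypothesis on $(n,k)$, so the identity is valid for $n=2$ and every $k$. Should this still be deemed a concern, one can bypass Theorem~\ref{T2.2}(2) entirely and derive the needed formula directly from Lemma~\ref{L2.2}: for $y\neq 1/2$ and $x=y(1-y)$,
$$F_{2p^k}(1,x)=\frac{y^{2p^k}-(1-y)^{2p^k}}{2y-1}=\frac{(y^{2}-(1-y)^{2})^{p^k}}{2y-1}=(2y-1)^{p^k-1}=(1-4x)^{\frac{p^k-1}{2}},$$
after which the remainder of the argument proceeds identically.
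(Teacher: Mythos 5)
Your proof is correct and takes essentially the same route as the paper: set $n=2$ in Theorem~\ref{T2.2}(2), use $F_2(1,x)=1$ to obtain $F_{2p^k}(1,x)=(1-4x)^{\frac{p^k-1}{2}}$, and conclude from the standard power-map criterion. Your added remark that the hypothesis $\gcd(n,k)=1$ in Theorem~\ref{T2.2}(2) plays no role (together with the direct fallback via Lemma~\ref{L2.2}) in fact repairs a detail the paper applies without comment when $k$ is even, so the extra care is warranted rather than superfluous.
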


\begin{proof}
Let $n=2$ in Theorem~\ref{T2.2} (2). Since $F_2(1,x)=1$, we have
$F_{2\cdot p^k}(1,x)=(F_2(1,x))^{p^k}(1-4x)^{\frac{p^k-1}{2}}=(1-4x)^{\frac{p^k-1}{2}}$. Hence the proof. 
\end{proof}

\begin{thm}\label{T2.6}
The generating function of $F_n(1,x)$ is given by 
$$\displaystyle\sum_{n=0}^{\infty}\,F_{n}(1,x)\,z^n=\displaystyle\frac{z}{1-z+xz^2}.$$
\end{thm}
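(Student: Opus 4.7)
The plan is to derive the generating function directly from the three-term recurrence established in the preceding proposition, namely $F_0(1,x)=0$, $F_1(1,x)=1$, and $F_n(1,x)=F_{n-1}(1,x)-xF_{n-2}(1,x)$ for $n\ge 2$. This is the standard trick for turning a linear recurrence with constant (in $n$) coefficients into a rational generating function, and $x$ here is simply inert with respect to the summation variable $z$.

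First I would set $G(z)=\sum_{n=0}^{\infty}F_n(1,x)z^n$, split off the $n=0$ and $n=1$ terms (which contribute $0$ and $z$ respectively), and rewrite the remaining tail using the recurrence:
\[
G(z)=z+\sum_{n=2}^{\infty}\bigl(F_{n-1}(1,x)-xF_{n-2}(1,x)\bigr)z^n.
\]
Next I would reindex each of the two sums by shifting $n\mapsto n+1$ and $n\mapsto n+2$ respectively, pulling out factors of $z$ and $xz^2$, so that both sums re-express as multiples of $G(z)$ (using $F_0(1,x)=0$ to avoid a boundary correction in the first shifted sum). This yields the identity
\[
G(z)=z+zG(z)-xz^2G(z).
\]

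Finally I would solve for $G(z)$, obtaining $G(z)(1-z+xz^2)=z$ and hence $G(z)=z/(1-z+xz^2)$, which is exactly the claimed formula. There is no genuine obstacle here; the only thing to be slightly careful about is the bookkeeping when shifting indices, in particular verifying that the $F_0(1,x)=0$ initial condition is precisely what is needed so that the shifted series $\sum_{m\ge 1}F_m(1,x)z^m$ equals $G(z)$ without any correction term. Once that is noted, the computation is purely formal and the theorem follows immediately.
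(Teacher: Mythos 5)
Your proposal is correct and is essentially the paper's own argument: the paper multiplies $(1-z+xz^2)$ against the series and invokes the same recurrence $F_n(1,x)=F_{n-1}(1,x)-xF_{n-2}(1,x)$ with initial values $F_0(1,x)=0$, $F_1(1,x)=1$, which is just your computation read in the other direction. The index-shifting bookkeeping you flag is handled identically in the paper, so there is nothing further to add.
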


\begin{proof}
\[
\begin{split}
(1-z+xz^2)\displaystyle\sum_{n=0}^{\infty}\,F_{n}(1,x)\,z^n &=\displaystyle\sum_{n=0}^{\infty}\,F_{n}(1,x)\,z^n-\,\displaystyle\sum_{n=0}^{\infty}\,F_{n}(1,x)\,z^{n+1}+\,x\,\displaystyle\sum_{n=0}^{\infty}\,F_{n}(1,x)\,z^{n+2}\cr
&=F_0(1,x)+F_1(1,x)z-F_0(1,x)z \cr &+\displaystyle\sum_{n=0}^{\infty}\,(F_{n+2}(1,x)-F_{n+1}(1,x)+xF_{n}(1,x))\,z^{n+2}
\end{split}
\]
Since $F_0(1,x)=0$, $F_1(1,x)=1$, and $F_{n+2}(1,x)=F_{n+1}(1,x)-xF_{n}(1,x)$ for $n\geq 0$, we have the desired result. 
\end{proof}

\begin{lem}\label{L2.6}(See \cite{Hou-Mullen-Sellers-Yucas-FFA-2009})
Let $q=p^e$ and Let $x\in \f_{q^2}$. Then 
$$ x(1-x)\in \f_q \,\,\textnormal{if and only if} \,\,x^q=x \,\,\textnormal{or}\,\, x^q=1-x.$$
Also, if we define
$$V=\{x\in \f_{q^2}\,;\,x^q=1-x\},$$
then $\f_q \cap V=\{\frac{1}{2}\}$. 
\end{lem}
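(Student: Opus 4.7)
The plan is to unpack the condition $x(1-x)\in\f_q$ via the Frobenius characterization, and then factor the resulting polynomial identity to isolate the two cases.

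For the forward direction of the equivalence, I would start from the fact that $x(1-x)\in\f_q$ is equivalent to the Galois-invariance condition $(x(1-x))^q=x(1-x)$. Setting $y:=x^q$ and using that Frobenius is a homomorphism, this becomes $y(1-y)=x(1-x)$, i.e.\ $y-x=y^2-x^2$. Factoring gives
\[
(y-x)\bigl(1-(y+x)\bigr)=0,
\]
so either $y=x$ (that is, $x^q=x$) or $y=1-x$ (that is, $x^q=1-x$), which is exactly the stated dichotomy.

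For the reverse direction, the two cases are immediate: if $x^q=x$ then $x\in\f_q$ and hence $x(1-x)\in\f_q$; if $x^q=1-x$, then
\[
\bigl(x(1-x)\bigr)^q = x^q(1-x^q) = (1-x)\bigl(1-(1-x)\bigr) = (1-x)x,
\]
so again $x(1-x)$ is Frobenius-fixed and lies in $\f_q$.

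Finally, for the claim $\f_q\cap V=\{\frac12\}$, any $x$ in this intersection satisfies both $x^q=x$ and $x^q=1-x$, so $x=1-x$, forcing $2x=1$; since $p$ is odd (as standing assumption of the paper after Remark on characteristic two), this gives the unique solution $x=\tfrac12$. The converse is trivial since $\tfrac12\in\f_q$ and $1-\tfrac12=\tfrac12$. The only mild subtlety is the characteristic hypothesis needed to solve $2x=1$; everything else is a one-line Frobenius computation, so I do not anticipate a genuine obstacle.
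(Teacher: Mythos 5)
Your proposal is correct and complete. Note that the paper itself offers no proof of this lemma --- it is quoted with a citation to Hou, Mullen, Sellers and Yucas --- so there is no in-paper argument to compare against; your Frobenius computation (reduce $x(1-x)\in\f_q$ to $(x(1-x))^q=x(1-x)$, substitute $y=x^q$, and factor $y-x=(y-x)(y+x)$ into the dichotomy $x^q=x$ or $x^q=1-x$) is exactly the standard argument from that reference. You also correctly handle both directions rather than just the factoring step, and your remark that the claim $\f_q\cap V=\{\tfrac12\}$ requires odd characteristic is apt: in characteristic $2$ the intersection is empty (since $x^q=x$ and $x^q=1-x$ force $0=1$), which is consistent with the paper's standing assumption, stated in the remark following Theorem~\ref{T2.2}, that $p$ is odd from that point onward.
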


\begin{thm}
Let $p$ be an odd prime. Then $F_n(1,x)$ is a PP of $\f_q$ if and only if the function $y \mapsto \displaystyle\frac{y^n-(1-y)^n}{2y-1}$ is a 2-to-1 mapping on $(\f_q \cup V)\setminus \{\frac{1}{2}\}$ and $\displaystyle\frac{y^n-(1-y)^n}{2y-1}\neq \frac{n}{2^{n-1}}$ for any $y\in (\f_q \cup V)\setminus \{\frac{1}{2}\}$. 
\end{thm}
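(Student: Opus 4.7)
The plan is to convert the question on $\f_q$ to a question on $(\f_q \cup V)\setminus\{\tfrac{1}{2}\}$ using the functional equation from Theorem~\ref{T2.2}(1) together with the parametrization supplied by Lemma~\ref{L2.6}. First I would set $\psi(y) := y(1-y)$ and check that $\psi$ is a $2$-to-$1$ surjection from $(\f_q \cup V)\setminus\{\tfrac{1}{2}\}$ onto $\f_q\setminus\{\tfrac{1}{4}\}$. By Lemma~\ref{L2.6} the image of $\psi$ lies in $\f_q$; the identity $y(1-y)=\tfrac{1}{4}\Leftrightarrow y=\tfrac{1}{2}$ shows $\tfrac{1}{4}$ is omitted; and since $|V|=q$ (the additive equation $y^q+y=1$ has exactly $q$ solutions) with $\f_q\cap V=\{\tfrac{1}{2}\}$, the domain has $2q-2$ elements, while $y$ and $1-y$ are distinct members of every fibre, forcing the fibres to consist of precisely two points.

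Next, by Theorem~\ref{T2.2}(1), the composite $F_n(1,\cdot)\circ \psi$ equals the map $\Phi(y):=\frac{y^n-(1-y)^n}{2y-1}$, which is automatically invariant under $y\mapsto 1-y$; moreover, $F_n(1,\tfrac14)=\tfrac{n}{2^{n-1}}$. The biconditional is then proved by observing that $F_n(1,x)$ permutes $\f_q$ if and only if simultaneously (i) $F_n(1,\cdot)$ is injective on $\f_q\setminus\{\tfrac{1}{4}\}$, and (ii) the value $\tfrac{n}{2^{n-1}}$ is not attained by $F_n(1,\cdot)$ on $\f_q\setminus\{\tfrac{1}{4}\}$. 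Because $\psi$ is a $2$-to-$1$ surjection onto $\f_q\setminus\{\tfrac{1}{4}\}$ with each fibre of the form $\{y,1-y\}$, and because $\Phi(y)=\Phi(1-y)$, condition (i) is exactly the statement that $\Phi$ is a $2$-to-$1$ map on $(\f_q\cup V)\setminus\{\tfrac{1}{2}\}$, while condition (ii) is exactly $\Phi(y)\neq \tfrac{n}{2^{n-1}}$ for every $y$ in that set.

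The main obstacle, though modest, is a well-definedness check: when $y\in V\setminus\f_q$ one must verify that $\Phi(y)\in\f_q$, so that the $2$-to-$1$ condition meaningfully compares values of $\Phi$ taken at points of $\f_q$ and at points of $V$. This is handled by applying the Frobenius $\,\cdot^q$: on $V$ one has $y^q=1-y$, which swaps $y$ and $1-y$ in both numerator and denominator of $\Phi(y)$, leaving $\Phi(y)$ fixed and hence in $\f_q$. Assembling these ingredients yields the theorem in both directions.
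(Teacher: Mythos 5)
Your proposal is correct and follows essentially the same route as the paper: the parametrization $x=y(1-y)$ on $(\f_q\cup V)\setminus\{\frac{1}{2}\}$ via Lemma~\ref{L2.6}, the functional equation of Theorem~\ref{T2.2}(1), and the special role of $y=\frac{1}{2}\leftrightarrow x=\frac{1}{4}$ with value $\frac{n}{2^{n-1}}$. Your additional verifications (the cardinality count for $V$, and the Frobenius check that $\Phi(y)\in\f_q$ for $y\in V$) are sound refinements of details the paper leaves implicit, but the underlying argument is the same.
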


\begin{proof}
For necessity, assume that $F_n(1,x)$ is a PP of $\f_q$ and $y_1, y_2\in (\f_q \cup V)\setminus \{\frac{1}{2}\}$ such that 
$\displaystyle\frac{y_1^n-(1-y_1)^n}{2y_1-1}=\displaystyle\frac{y_2^n-(1-y_2)^n}{2y_2-1}$. Then $y_1(1-y_1), y_2(1-y_2) \in \f_q$ and $F_n(1,y_1(1-y_1))=F_n(1,y_2(1-y_2))$. Since $F_n(1,x)$ is a PP of $\f_q$, we have $y_1(1-y_1)=y_2(1-y_2)$ which implies that $y_1=y_2$ or $1-y_2$. So $y \mapsto \displaystyle\frac{y^n-(1-y)^n}{2y-1}$ is a 2-to-1 mapping on $(\f_q \cup V)\setminus \{\frac{1}{2}\}$. If $y\in (\f_q \cup V)\setminus \{\frac{1}{2}\}$, then $y(1-y)\in \f_q$ and $y(1-y)\neq \frac{1}{2}(1-\frac{1}{2})$. Thus $\displaystyle\frac{y^n-(1-y)^n}{2y-1}=F_n(1,y(1-y))\neq F_n(1,\frac{1}{2}(1-\frac{1}{2}))=\frac{n}{2^{n-1}}$. 

For sufficiency, assume $x_1, x_2 \in \f_q$ such that $F_n(1,x_1)=F_n(1,x_2)$. Write $x_1=y_1(1-y_1)$ and $x_2=y_2(1-y_2)$, where $y_1, y_2 \in (\f_q \cup V)$. Then 
$$\displaystyle\frac{y_1^n-(1-y_1)^n}{2y_1-1}=F_n(1,x_1)=F_n(1,x_2)=\displaystyle\frac{y_2^n-(1-y_2)^n}{2y_2-1}.$$
If $y_1=\frac{1}{2}$, then 
$$F_n(1,x_2)=F_n(1,x_1)=F_n(1,\frac{1}{4})=\frac{n}{2^{n-1}},$$
which implies that $y_2=\frac{1}{2}$. Hence $x_1=x_2$. 

If $y_1, y_2 \neq \frac{1}{2}$, since  $y \mapsto \displaystyle\frac{y^n-(1-y)^n}{2y-1}$ is a 2-to-1 mapping on $(\f_q \cup V)\setminus \{\frac{1}{2}\}$, we have $y_1=y_2$ or $y_1=1-y_2$. Hence $x_1=x_2$. 

\end{proof}


\section{Miscellaneous Results}

Note that $F_n(1,0)=1$ for $n\geq 1$. Also, we have the following recursion relation for $F_n(1,1)$.
$$ F_0(1,1)=0, \,\, F_1(1,1)=1,$$
$$F_n(1,1)=F_{n-1}(1,1)-F_{n-2}(1,1), \,\textnormal{for}\,\, n\geq 2.$$

It follows that 
 $$F_2(1,1)=1,  F_3(1,1)=0,  F_4(1,1)=-1,  F_5(1,1)=-1,  F_6(1,1)=0.$$
Then we have
$$
F_n(1,1) = \left\{
        \begin{array}{ll}
            0 & ,\quad n \equiv 0, 3 \pmod{6},\\[0.3cm]
            1 & ,\quad n \equiv 1, 2 \pmod{6}, \\[0.3cm]
           -1 & ,\quad n \equiv 4, 5 \pmod{6}.
        \end{array}
    \right.
$$

\begin{thm}
Assume that $F_n(1,x)$ is a PP of $\f_q$. If $p=2$, then $3|n$. If $p$ is an odd prime, then $n\not\equiv 1, 2 \pmod{6}$. 
\end{thm}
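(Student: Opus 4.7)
The plan is to exploit the fact that $F_n(1,0)=1$ for all $n\ge 1$ and compare it with $F_n(1,1)$, which the excerpt has already computed. If $F_n(1,x)$ is a permutation of $\mathbb{F}_q$, then in particular the two field values $0$ and $1$ must have distinct images, so we must have $F_n(1,1)\ne 1$. This single observation should yield both parts of the theorem once one knows $F_n(1,1)$ in the relevant characteristic.

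First I would verify $F_n(1,0)=1$ for $n\ge 1$: setting $x=0$ in the recursion $F_n(1,x)=F_{n-1}(1,x)-xF_{n-2}(1,x)$ gives $F_n(1,0)=F_{n-1}(1,0)$, and the base case $F_1(1,0)=1$ propagates. (Alternatively, substitute $x=0$ into the generating function of Theorem~\ref{T2.6}.) Then I would invoke the values of $F_n(1,1)$ that have already been tabulated in the excerpt: in odd characteristic, $F_n(1,1)=1$ precisely when $n\equiv 1,2\pmod 6$. So assuming $F_n(1,x)$ is a PP forces $F_n(1,1)\ne F_n(1,0)=1$, which immediately rules out $n\equiv 1,2\pmod 6$.

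For the characteristic two case, the same recursion $F_n(1,1)=F_{n-1}(1,1)-F_{n-2}(1,1)$ becomes $F_n(1,1)=F_{n-1}(1,1)+F_{n-2}(1,1)$ when reduced mod $2$. Starting from $F_0(1,1)=0$, $F_1(1,1)=1$, the sequence in $\mathbb{F}_2$ reads $0,1,1,0,1,1,0,\ldots$ with period $3$, so $F_n(1,1)=0$ iff $3\mid n$ and $F_n(1,1)=1$ otherwise. Requiring $F_n(1,1)\ne 1$ therefore forces $3\mid n$, giving the $p=2$ statement.

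I do not anticipate a real obstacle here; the argument is essentially a one-line necessary condition (distinctness of images at $0$ and $1$) combined with the already-established closed forms for $F_n(1,1)$. The only minor care point is making sure the reduction mod~$2$ of the recursion is done explicitly, since the values of $F_n(1,1)$ displayed earlier were computed in characteristic zero (or odd characteristic) and in characteristic two the signs collapse.
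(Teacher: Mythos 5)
Your proposal is correct and is essentially the paper's own argument: the paper likewise notes $F_n(1,0)=1$ for $n\ge 1$, uses the tabulated values of $F_n(1,1)$ to rule out $n\equiv 1,2\pmod 6$ in odd characteristic, and in characteristic $2$ observes (tersely, where you spell it out) that the signs collapse so that $F_n(1,1)=1$ unless $3\mid n$. Your explicit mod-$2$ reduction of the recursion is a welcome elaboration of the paper's ``clearly $3\mid n$,'' but it is the same proof.
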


\begin{proof}
If $p=2$, since $F_n(1,x)$ is a PP of $\f_q$ and $F_n(1,0)=1$, clearly $3|n$. If $p$ is an odd prime, then a similar argument shows that $n\not\equiv 1, 2 \pmod{6}$.
\end{proof}

Let $p$ be odd. We show that the $n$-th reversed Dickson polynomial of the third kind $F_n(1,x)$ can be written explicitly. For $n\geq 0$, define
$$f_n(x)=\displaystyle\sum_{j\geq 0} \,\,\binom{n}{2j+1}\,\,x^j.$$

\begin{prop}\label{P3.2}
Let $p$ be an odd prime. Then in $\f_q[x]$, 
$$F_n(1,x)=\Big(\frac{1}{2}\Big)^{n-1}\,f_n(1-4x).$$
In particular, $F_n(1,x)$ is a PP of $\f_q$ if and only if $f_n(x)$ is a PP of $\f_q$. 
\end{prop}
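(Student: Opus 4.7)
My plan is to derive the identity by substituting the parametrization $x = y(1-y)$ from Theorem~\ref{T2.2}(1) into both sides and reducing everything to the elementary binomial identity
\[
(1+u)^n - (1-u)^n = 2\sum_{j\ge 0}\binom{n}{2j+1}u^{2j+1}.
\]
First I would set $u = 2y-1$, so that $y = (1+u)/2$ and $1-y = (1-u)/2$, which gives the polynomial identity
\[
\frac{y^n-(1-y)^n}{2y-1} \;=\; \frac{1}{2^{n-1}}\sum_{j\ge 0}\binom{n}{2j+1}u^{2j} \;=\; \frac{1}{2^{n-1}}\,f_n\bigl((2y-1)^2\bigr)
\]
in $\f_q[y]$ (note $2y-1$ divides $y^n-(1-y)^n$ since both sides vanish at $y = 1/2$, so the left side is a bona fide polynomial in $y$). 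On the other hand, $(2y-1)^2 = 1 - 4y(1-y) = 1 - 4x$, and by Theorem~\ref{T2.2}(1) the left-hand side equals $F_n(1, y(1-y))$. Consequently both polynomial identities
\[
F_n(1, y(1-y)) \;=\; \tfrac{1}{2^{n-1}}\, f_n(1-4\cdot y(1-y))
\]
hold in $\f_q[y]$.

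Next I would upgrade this to an identity in $\f_q[x]$ by the standard observation that the substitution homomorphism $\f_q[x]\to\f_q[y]$, $P(x)\mapsto P(y(1-y))$, is injective: if $P(x) - Q(x)$ were a nonzero polynomial of degree $d$ in $x$, then $P(y(1-y)) - Q(y(1-y))$ would be a nonzero polynomial of degree $2d$ in $y$. Applying this with $P(x) = F_n(1,x)$ and $Q(x) = (1/2)^{n-1} f_n(1-4x)$ yields the desired polynomial identity $F_n(1,x) = (1/2)^{n-1} f_n(1-4x)$ in $\f_q[x]$.

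For the ``in particular'' claim I would simply note that since $p$ is odd, both $x \mapsto 1-4x$ and multiplication by the nonzero constant $(1/2)^{n-1}$ are permutations of $\f_q$. Composing bijections with $f_n$ (in either order) therefore preserves the property of being a PP of $\f_q$, which immediately gives the equivalence between $F_n(1,x)$ and $f_n(x)$ being permutation polynomials.

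There is no real obstacle; the only mild subtlety is the passage from a polynomial identity in the parameter $y$ to one in $x$, and that is handled cleanly by the degree-doubling injectivity of $y\mapsto y(1-y)$ noted above. The rest is bookkeeping with the binomial expansion and the substitution $1-4x = (2y-1)^2$.
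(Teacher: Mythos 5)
Your proposal is correct, and its computational heart --- the substitution $u=2y-1$ together with the expansion $(1+u)^n-(1-u)^n=2\sum_{j\ge 0}\binom{n}{2j+1}u^{2j+1}$ --- is exactly the paper's. Where you genuinely diverge is in how the identity in $\f_q[x]$ is established. The paper argues pointwise: for each $x\in\f_q$ it chooses $y\in\f_{q^2}$ with $x=y(1-y)$, applies the functional equation when $x\neq\frac{1}{4}$, and treats $x=\frac{1}{4}$ as a separate case via $f_n(0)=n$ and $F_n(1,\frac{1}{4})=\frac{n}{2^{n-1}}$. Read literally, that argument shows the two sides agree as \emph{functions} on $\f_q$ (i.e., modulo $x^q-x$), which for $n$ large does not by itself give equality in $\f_q[x]$; your route --- proving an honest polynomial identity in $\f_q[y]$ and then descending through the degree-doubling injectivity of $P(x)\mapsto P(y(1-y))$ --- delivers the literal identity in $\f_q[x]$ and dispenses with the $x=\frac{1}{4}$ case entirely (it is absorbed into the identity, since $f_n(0)=n$). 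The one step you should tighten is the appeal to Theorem~2.2(1): as stated it is a pointwise evaluation for $y\neq\frac{1}{2}$ in $\f_{q^2}$, whereas you use it as an identity in $\f_q[y]$; since both sides have degree $n-1$ in $y$ while agreement is guaranteed only at the $q^2-1$ points of $\f_{q^2}\setminus\{\frac{1}{2}\}$, interpolation alone does not suffice once $n\ge q^2$. The patch is immediate --- the functional equations $D_n(1,y(1-y))=y^n+(1-y)^n$ and $(2y-1)\,E_n(1,y(1-y))=y^{n+1}-(1-y)^{n+1}$ hold identically in $\mathbb{Z}[y]$, so $(2y-1)\,F_n(1,y(1-y))=y^n-(1-y)^n$ holds in $\f_q[y]$ and one may cancel the nonzero factor $2y-1$ in the integral domain $\f_q[y]$ --- but it deserves a sentence. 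Your handling of the ``in particular'' clause, via the affine bijection $x\mapsto 1-4x$ and scaling by $\bigl(\frac{1}{2}\bigr)^{n-1}\neq 0$, makes explicit what the paper passes over with ``clearly,'' and is fine.
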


\begin{proof}
Let $x\in \f_q$. There exists $y\in \f_{q^2}$ such that $x=y(1-y)$. 
If $x\neq \frac{1}{4}$, we have 
$$F_n(1,x)=\displaystyle\frac{y^n-(1-y)^n}{2y-1}.$$
Let $u=2y-1$. Then we have
\[
\begin{split}
F_n(1,x)&=\frac{1}{u}\Big\{\Big(\frac{1+u}{2}\Big)^n-\Big(\frac{1-u}{2}\Big)^n\Big\}\cr
&=\Big(\frac{1}{2}\Big)^n\,\, \frac{1}{u}\,\, \Big\{(1+u)^n-(1-u)^n\Big\}\cr
&=\Big(\frac{1}{2}\Big)^{n-1}\,\,\displaystyle\sum_{j\geq 0}\,\binom{n}{2j+1}\,\,u^{2j}.
\end{split}
\]
Then we have
$$F_n(1,x)=\Big(\frac{1}{2}\Big)^{n-1}\,\,f_n(u^2).$$
Since $u=2y-1$, $u^2=1-4y(y-1)$.
\[
\begin{split}
F_n(1,x)&=\Big(\frac{1}{2}\Big)^{n-1}\,\,f_n(1-4y(y-1))\cr
&=\Big(\frac{1}{2}\Big)^{n-1}\,\,f_n(1-4x).
\end{split}
\]

If $x=\frac{1}{4}$, since $f_n(0)=n$, we have
$$F_n(1,x)=\frac{n}{2^{n-1}}=\Big(\frac{1}{2}\Big)^{n-1}\,f_n(0)=\Big(\frac{1}{2}\Big)^{n-1}\,f_n(1-4x).$$

Clearly, $F_n(1,x)$ is a PP of $\f_q$ if and only if $f_n(x)$ is  PP of $\f_q$. 
\end{proof}

\begin{thm}
Let $p$ be an odd prime, $q$ a power of $p$, and $n$ be a nonnegative even integer with $p\nmid \,n$. If $F_n(1,x)$ is a PP of $\f_q$, then $n\equiv 0 \pmod{4}$ and $(\lfloor \frac{n-1}{2} \rfloor, q-1)=1$. 
\end{thm}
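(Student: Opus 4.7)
My plan is to reduce, via Proposition~\ref{P3.2}, to deciding when $f_n(x)=\sum_{j\ge 0}\binom{n}{2j+1}x^j$ is a PP of $\f_q$, and then exploit a palindromic symmetry of $f_n$. Writing $n=2m$, the leading coefficient of $f_n$ is $\binom{2m}{2m-1}=n\ne 0$ (since $p\nmid n$), so $\deg f_n=m-1$; and the binomial symmetry $\binom{2m}{2j+1}=\binom{2m}{2(m-1-j)+1}$ yields the reciprocal relation
\[
f_n(x)=x^{m-1}f_n(1/x)\qquad(x\ne 0).
\]
In particular, the nonzero roots of $f_n$ in $\overline{\f_q}$ come in pairs $\{\alpha,1/\alpha\}$, and the only self-paired candidates are $\alpha=\pm 1$.

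To rule out $n\equiv 2\pmod 4$, assume $m$ is odd and evaluate $f_n$ at $0,\pm 1$. One has $f_n(0)=n\ne 0$ and $f_n(1)=\sum_j\binom{n}{2j+1}=2^{n-1}\ne 0$, while $f_n(-1)$ can be computed from the functional equation $f_n(u^2)=((1+u)^n-(1-u)^n)/(2u)$ by taking $u\in\f_{q^2}$ with $u^2=-1$: the identities $(1\pm u)^2=\pm 2u$ give $(1+u)^n=(2u)^m$ and $(1-u)^n=-(2u)^m$, whence $f_n(-1)=\pm 2^m\ne 0$. Since none of $0,\pm 1$ is a root, the reciprocal relation forces the roots of $f_n$ in $\f_q$ to split into honest pairs of distinct elements, so their total number is $0$ or $\ge 2$. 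Neither is consistent with $f_n$ being a bijection of $\f_q$, for which the element $0$ must have exactly one preimage, contradiction.

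Now assume $n\equiv 0\pmod 4$, so $m-1$ is odd, and suppose for contradiction that $g:=(m-1,q-1)>1$. Pick $\zeta\in\f_q^*$ with $\zeta^{m-1}=1$ and $\zeta\ne 1$. Because $m-1$ is odd, the possibility $\zeta=-1$ would give $\zeta^{m-1}=-1\ne 1$, so $\zeta\ne\pm 1$ and hence $\zeta\ne 1/\zeta$. The reciprocal relation then yields $f_n(\zeta)=\zeta^{m-1}f_n(1/\zeta)=f_n(1/\zeta)$, producing two distinct preimages of the same value and contradicting the PP hypothesis. Since $n$ is even, $m-1=\lfloor(n-1)/2\rfloor$, so this forces $(\lfloor(n-1)/2\rfloor,q-1)=1$, as required.

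The only slightly delicate step is the evaluation of $f_n(-1)$ for $n\equiv 2\pmod 4$: it requires passing to the quadratic extension $\f_{q^2}$ to access $u$ with $u^2=-1$, but once there the identities $(1\pm u)^2=\pm 2u$ reduce the computation to powers of $2u$ and finish it immediately. Everything else is a short consequence of the reciprocal symmetry of $f_n$.
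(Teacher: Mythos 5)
Your proof is correct, and its skeleton coincides with the paper's: reduce via Proposition~\ref{P3.2} to the question of when $f_n$ is a PP, exploit the self-reciprocal identity $f_n(x)=x^{\lfloor (n-1)/2\rfloor}f_n(1/x)$, evaluate $f_n$ at $0,\pm 1$ to force $n\equiv 0\pmod 4$, and use an element killed by $x^{m-1}-1$ to force $(\lfloor (n-1)/2\rfloor,q-1)=1$. Where you genuinely diverge is the key evaluation of $f_n(-1)$. The paper leaves the finite field: it computes the integer $\sum_j\binom{n}{2j+1}(-1)^j$ by a binomial-sum identity over $\mathbb{C}$ (citing \cite[Eq.~5.5]{Hou-2007}) and argues with $e^{\pm n\pi i/4}$, which implicitly needs the further remark that for $n\equiv 2\pmod 4$ the resulting integer $\pm 2^{n/2}$ stays nonzero modulo the odd prime $p$. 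You instead stay in characteristic $p$: taking $u\in\f_{q^2}$ with $u^2=-1$ and using $(1\pm u)^2=\pm 2u$ in the identity $f_n(u^2)=\bigl((1+u)^n-(1-u)^n\bigr)/(2u)$ gives $f_n(-1)=\pm 2^m\neq 0$ for $m$ odd, directly and self-containedly; your reformulation of the contradiction (nonzero roots of $f_n$ in $\f_q$ pair off as $\{\alpha,1/\alpha\}$, so their number is even, while a bijection needs exactly one) is a cosmetic reshuffling of the paper's "the unique root must be $-1$" argument. You also make explicit a point the paper glosses over in the gcd step: the paper picks $\epsilon$ of order $d>1$ and asserts $\epsilon\neq\epsilon^{-1}$, which would fail for $d=2$ and is rescued only because $n\equiv 0\pmod 4$ makes $\lfloor (n-1)/2\rfloor=m-1$ odd, hence $d$ odd; your observation that $\zeta=-1$ is impossible since $\zeta^{m-1}=1$ with $m-1$ odd handles exactly this case. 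In short: same architecture, but your treatment of the delicate step is entirely within finite fields and slightly more careful than the paper's.
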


\begin{proof}
Assume that $F_n(1,x)$ is a PP of $\f_q$. Then by Proposition~\ref{P3.2}, $f_n(x)$ is  PP of $\f_q$. 

Let $x_0 \in \f_q$ such that $f_n(x_0)=0$. $f_n(0)=n\neq 0$. Since $f_n$ is a PP of $\f_q$, $x_0 \neq 0$. 
\[
\begin{split}
f_n(x_0)&=\displaystyle\sum_{j\geq 0} \,\,\binom{n}{2j+1}\,\,x_0^j
\end{split}
\]

\[
\begin{split}
f_n(x_0^{-1})&=\displaystyle\sum_{j\geq 0} \,\,\binom{n}{2j+1}\,\,x_0^{-j}
\end{split}
\]

It is easy to see that 
\[
\begin{split}
f_n(x_0)&=x_0^{\lfloor \frac{n-1}{2} \rfloor}f_n(x_0^{-1}).
\end{split}
\]
So $f_n$ is a self-reciprocal. Since $f_n(x_0)=0$ and $x_0^{\lfloor \frac{n-1}{2} \rfloor}\neq 0$,  $f_n(x_0^{-1})=0$. Since $f_n$ is a PP of $\f_q$, $x_0=x_0^{-1}$, i.e. $x_0=\pm 1$. 
$$f_n(1)=\displaystyle\sum_{j\geq 0} \,\,\binom{n}{2j+1}\,=\,2^{n-1}\,\neq 0.$$
Therefore, $x_0=-1$. 
\[
\begin{split}
0&=f_n(-1)\cr
&=\displaystyle\sum_{j\geq 0} \,\,\binom{n}{2j+1}\,\,(-1)^{j}\cr
&=\displaystyle\sum_{j\equiv 1 \pmod{4}}\,\,\binom{n}{j} - \displaystyle\sum_{j\equiv 3 \pmod{4}} \,\,\binom{n}{j} \cr
&=\frac{1}{4}\Big[2^{n+1}+i^{-1}(1+i)^n-i^{-1}(1-i)^n \Big]\cr
&-\frac{1}{4}\Big[2^{n+1}-i^{-1}(1+i)^n+i^{-1}(1-i)^n \Big]\text{(by \cite[Eq.~5.5]{Hou-2007})}\cr
&=\frac{1}{2i}\Big[(1+i)^n-(1-i)^n \Big] \cr
&=\frac{i}{2}\Big[(1-i)^n-(1+i)^n \Big] \cr
&=\frac{i}{2}\Big[(\sqrt{2}\,\,e^{-\frac{\pi}{4} i})^n- (\sqrt{2}\,\,e^{\frac{\pi}{4} i})^n\Big] \cr
&=2^{\frac{n}{2}-1}\,\,i\,\,\Big[e^{-\frac{n\pi}{4} i}- e^{\frac{n\pi}{4} i}\Big].
\end{split}
\]

We have $\Big[e^{-\frac{n\pi}{4} i}- e^{\frac{n\pi}{4} i}\Big]=0.$ It follows that $n\equiv 0 \pmod{4}$.

Let $(\lfloor \frac{n-1}{2} \rfloor, q-1)=d>1.$ Let $\epsilon \in \f_q^*$ such that $o(\epsilon)=d$. Then 
\[
\begin{split}
f_n(\epsilon)&=\epsilon^{\lfloor \frac{n-1}{2} \rfloor}f_n(\epsilon^{-1}).
\end{split}
\]
\[
\begin{split}
f_n(\epsilon)&=f_n(\epsilon^{-1}).
\end{split}
\]
But $\epsilon\neq \epsilon^{-1}$. This contradicts the fact  that $f_n$ is a PP of $\f_q$. Hence  $(\lfloor \frac{n-1}{2} \rfloor, q-1)=1.$ 
\end{proof}

\begin{lem}\label{L3.4}(See \cite{Hou-Ly-FFA-2010})
Let $\epsilon \neq 0,1$ in some extension of $\f_q$ ($q$ odd) and let $y=\displaystyle\frac{\epsilon +1}{\epsilon -1}$. Then $y^2\in \f_q$ if and only if $\epsilon^{q+1}=1$ or $\epsilon^{q-1}=1$. 
\end{lem}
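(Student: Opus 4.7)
The plan is to reduce the statement \(y^2 \in \f_q\) to a clean condition on \(y^q\) and then translate that into a condition on \(\epsilon\) via a direct computation with the Frobenius.

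First I would observe that \(y^2 \in \f_q\) is equivalent to \((y^2)^q = y^2\), i.e.\ \(y^{2q} - y^2 = 0\). Factoring gives \((y^q - y)(y^q + y) = 0\), so \(y^2 \in \f_q\) if and only if \(y^q = y\) or \(y^q = -y\). This reduces the problem to analyzing these two cases separately.

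Next I would compute \(y^q\) explicitly using the Frobenius:
\[
y^q \;=\; \left(\frac{\epsilon+1}{\epsilon-1}\right)^{\!q} \;=\; \frac{(\epsilon+1)^q}{(\epsilon-1)^q} \;=\; \frac{\epsilon^q + 1}{\epsilon^q - 1}.
\]
Here I need to know that the denominator is nonzero, but since \(x^q - 1 = (x-1)^q\) in characteristic \(p\), the hypothesis \(\epsilon \neq 1\) forces \(\epsilon^q \neq 1\), so \(y^q\) is well defined.

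Now I would dispatch the two cases by cross-multiplication. In the case \(y^q = y\), the equation \((\epsilon^q+1)(\epsilon-1) = (\epsilon+1)(\epsilon^q-1)\) expands and simplifies (the \(\epsilon^{q+1}\) and \(-1\) terms cancel) to \(2\epsilon = 2\epsilon^q\); since \(q\) is odd, this gives \(\epsilon^q = \epsilon\), equivalently \(\epsilon^{q-1} = 1\) (using \(\epsilon \neq 0\)). In the case \(y^q = -y\), the equation \((\epsilon^q+1)(\epsilon-1) = -(\epsilon+1)(\epsilon^q-1)\) simplifies to \(2\epsilon^{q+1} = 2\), giving \(\epsilon^{q+1} = 1\). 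The reverse directions are immediate: if \(\epsilon^{q-1} = 1\) then \(\epsilon \in \f_q\), whence \(y \in \f_q\); if \(\epsilon^{q+1} = 1\) then \(\epsilon^q = \epsilon^{-1}\), and a one-line substitution gives \(y^q = \frac{\epsilon^{-1}+1}{\epsilon^{-1}-1} = -y\).

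There is no real obstacle here; the entire argument is a short equivalence-chain plus routine algebra. The only small technicality is recording that \(\epsilon^q \neq 1\) so that the expression \(y^q\) makes sense as a fraction, which is why the hypothesis \(\epsilon \neq 1\) (rather than just \(y^2\) being defined, i.e.\ \(\epsilon \neq 1\)) is needed in the statement.
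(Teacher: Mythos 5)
Your proof is correct, and every step checks out: the reduction of $y^2\in\f_q$ to $y^q=\pm y$, the Frobenius computation $y^q=(\epsilon^q+1)/(\epsilon^q-1)$ justified by $\epsilon^q-1=(\epsilon-1)^q\neq 0$, and the two cross-multiplications yielding $\epsilon^{q-1}=1$ and $\epsilon^{q+1}=1$ respectively (with $2$ invertible since $q$ is odd). The paper itself offers no proof of this lemma, merely citing Hou--Ly, and your argument is precisely the standard one from that source, so there is nothing to add.
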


\begin{thm}
Let $p>3$ be an odd prime and $n\geq 0$ be an integer with $3 | n$. If $F_n(1,x)$ is a PP of $\f_q$, then $(n,q^2-1)=3$. 
\end{thm}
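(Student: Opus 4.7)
The plan is a proof by contradiction. Set $d:=(n,q^2-1)$. Since $p>3$ forces $p^2\equiv 1\pmod 3$ and hence $3\mid q^2-1$, the hypothesis $3\mid n$ gives $3\mid d$. It therefore suffices to rule out $d>3$. From the explicit table preceding the theorem, $F_n(1,1)=0$ whenever $3\mid n$, so the whole proof reduces to exhibiting, under the assumption $d>3$, a second element $x\in\f_q$ with $x\neq 1$ and $F_n(1,x)=0$, which would violate the permutation property.

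Write $d_1=(n,q-1)=(d,q-1)$ and $d_2=(n,q+1)=(d,q+1)$. The key arithmetic claim is: \emph{if $d>3$, then $d_1>3$ or $d_2>3$}. Since $\gcd(q-1,q+1)=2$, no odd prime divides both $q-1$ and $q+1$; in particular $3$ divides exactly one of them, WLOG $3\mid q-1$, so $3\mid d_1$ and $3\nmid d_2$. Assume for contradiction $d_1\le 3$ and $d_2\le 3$; then $d_1=3$ and $d_2\in\{1,2\}$. From $v_2(d_1)=0$ we get $\min(v_2(d),v_2(q-1))=0$, and since $v_2(q-1)\ge 1$ this forces $v_2(d)=0$. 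Any odd prime $\ell>3$ dividing $d$ must divide $q+1$ (else it would contribute to $d_1>3$), hence divides $d_2$, contradicting $d_2\le 2$. So $d=3$, a contradiction. This 2-adic bookkeeping is the main obstacle of the proof.

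Fix $i\in\{1,2\}$ with $d_i>3$ and let $v$ be a generator of $\mu_{d_i}\subset\f_{q^2}^*$, so $v$ has order exactly $d_i>3$ and in particular $v\notin\{1,-1,\omega,\omega^2\}$ for $\omega$ a primitive cube root of unity. Put $y=v/(v+1)$ and $x=y(1-y)=v/(v+1)^2$. If $i=1$, then $v\in\f_q^*$ and $y\in\f_q$; if $i=2$, then $v^q=v^{-1}$, so $y^q=v^{-1}/(v^{-1}+1)=1/(v+1)=1-y$, placing $y$ in the set $V$ of Lemma~\ref{L2.6}. Either way $x\in\f_q$. By Theorem~\ref{T2.2}(1),
\[
F_n(1,x)=\frac{y^n-(1-y)^n}{2y-1}=\frac{v^n-1}{(v-1)(v+1)^{n-1}}=0,
\]
since $d_i\mid n$ implies $v^n=1$. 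Finally $x=1$ would give $v^2+v+1=0$, forcing $v$ to be a primitive cube root of unity, which was excluded. Hence $F_n(1,1)=F_n(1,x)=0$ with $1\neq x$ in $\f_q$, contradicting the PP property, and we conclude $d=3$.
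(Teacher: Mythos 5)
Your proof is correct (up to one easily repaired ellipsis, noted below) and takes a genuinely different route from the paper. The paper's proof transfers the problem to the polynomial $f_n$ via Proposition~\ref{P3.2}, forms the set $E$ of nontrivial roots of unity of order dividing $(n,q-1)$ or $(n,q+1)$, pushes them into $\f_q$ by the M\"obius substitution of Lemma~\ref{L3.4}, and derives a contradiction from the cardinality estimate $|E|\ge 4$, which requires picking three distinct elements of $E$. You instead stay with $F_n$ itself, use the precomputed value $F_n(1,1)=0$ for $3\mid n$ (from the table before Theorem~3.1) as one root, and manufacture a \emph{single} second root $x=v/(v+1)^2\ne 1$ from one root of unity $v$ of order $d_i>3$, invoking Theorem~\ref{T2.2}(1) together with Lemma~\ref{L2.6} to handle the case $v^{q+1}=1$. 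Your substitution is secretly the same one underlying the paper's argument, since $1-4x=\left(\frac{v-1}{v+1}\right)^2$, and $x=1$ corresponds exactly to $v$ a primitive cube root of unity --- which is why $F_n(1,1)=0$ whenever $3\mid n$. What is genuinely new is your arithmetic lemma that $d:=(n,q^2-1)>3$ forces $(n,q-1)>3$ or $(n,q+1)>3$, and this buys something real: the paper's claim $|E|\ge 4$ rests on $(n,q-1)(n,q+1)=(n,q^2-1)\ge 6$, but in the boundary configuration $(n,q-1)=3$, $(n,q+1)=2$ one actually gets $|E|=3$ with $-1\in E$, where the map $\epsilon\mapsto(\epsilon-1)/(\epsilon+1)$ degenerates and only one usable root survives; your $2$-adic bookkeeping ($d_1=3$ odd forces $v_2(d)=0$, whence $d_2$ is odd, hence $d_2=1$) shows this configuration cannot occur, so your argument closes a case the paper's count does not cleanly cover. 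The one gap: from ``$d$ is odd and no prime $\ell>3$ divides $d$'' you conclude $d=3$, but you must still exclude $d=3^a$ with $a\ge 2$; this is one line in your setup, since (in the case $3\mid q-1$) $9\mid d$ would give $9\mid q^2-1$ with $3\nmid q+1$, hence $9\mid q-1$ and $9\mid (d,q-1)=d_1=3$, absurd, and symmetrically when $3\mid q+1$. With that sentence added, your proof is complete.
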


\begin{proof}
Since $p>3$, we have $q\equiv 1\,\,\textnormal{or}\,\,-1\,\,\pmod{3}$. Since $3|n$, we have $3|(n, q^2-1)$. We show that $(n,q^2-1)\leq 3$. Assume to the contrary that $(n,q^2-1)>3$. Let 
$$E=\{ \epsilon \in \f_{q^2}^{*}:\,\epsilon \neq 1,\, \epsilon^{(n,q+1)}=1\,\textnormal{or}\,\,\epsilon^{(n,q-1)}=1\}.$$
\[
\begin{split}
|E|&=|\{ \epsilon \in \f_{q^2}^{*}:\,\epsilon \neq 1,\, \epsilon^{(n,q+1)}=1\}| +|\{ \epsilon \in \f_{q^2}^{*}:\,\epsilon \neq 1,\, \epsilon^{(n,q-1)}=1\}| \cr
&-|\{ \epsilon \in \f_{q^2}^{*}:\,\epsilon \neq 1,\, \epsilon^{(n,q-1,q+1)}=1\}| \cr
&=((n,q+1)-1) + ((n,q-1)-1) - 0\cr
&=(n,q+1)+ (n,q-1) -2.
\end{split}
\]
Since $(n,q+1)(n,q-1)=(n,q^2-1)\geq 6$, we have $|E|\geq $4. Let $\epsilon_1, \epsilon_2, \epsilon_3 \in E$ be distinct and let $y_i=\frac{\epsilon_i -1}{\epsilon_i +1},\,\,i=1,2,3$. By Lemma~\ref{L3.4}, $y_i\in \f_q$. Since $\epsilon_i=\frac{1+y_i}{1-y_i}$, we have $\Big(\frac{1+y_i}{1-y_i}\Big)^n=1$, i.e. $(1+y_i)^n=(1-y_i)^n$, i.e.
$$f_n(y_i^2)= \frac{1}{2y_i}\,\, \{(1+y_i)^n-(1-y_i)^n\}=0.$$ 

Since $\epsilon_1, \epsilon_2,\,\textnormal{and}\,\epsilon_3$ are distinct, $y_1, y_2, \,\textnormal{and}\,y_3$ are distinct. This contradicts the fact that $f_n$ is a PP of $\f_q$. 

\end{proof}


\section{Computation of $\sum_{a\in \f_q}F_{n}(1,a)$. }

We compute the sum $\sum_{a\in \f_q}F_{n}(1,a)$ in this section. The result provides a necessary condition for $F_n(1,x)$ to be a PP of $\f_q$. By Theorem~\ref{T2.6}, we have
\begin{equation}\label{E4.1}
\begin{split}
\displaystyle\sum_{n=0}^{\infty}\,F_{n}(1,x)\,z^n&=\displaystyle\frac{z}{1-z+xz^2}\cr
&=\displaystyle\frac{z}{1-z}\,\,\displaystyle\frac{1}{1-(\frac{z^2}{z-1})\,x}\cr
&=\displaystyle\frac{z}{1-z}\,\,\displaystyle\sum_{k\geq 0} \Big(\frac{z^2}{z-1}\Big)^k\,\,x^k\cr
&=\displaystyle\frac{z}{1-z}\,\,\Big[1+\displaystyle\sum_{k=1}^{q-1}\displaystyle\sum_{l\geq 0} \Big(\frac{z^2}{z-1}\Big)^{k+l(q-1)}\,\,x^{k+l(q-1)} \Big]\cr
&\equiv \displaystyle\frac{z}{1-z}\,\,\Big[1+\displaystyle\sum_{k=1}^{q-1}\displaystyle\sum_{l\geq 0} \Big(\frac{z^2}{z-1}\Big)^{k+l(q-1)}\,\,x^k \Big]\,\,\,\pmod{x^q-x}\cr
&=\displaystyle\frac{z}{1-z}\,\,\Big[1+\displaystyle\sum_{k=1}^{q-1}\displaystyle\frac{(\frac{z^2}{z-1})^{k}}{1-(\frac{z^2}{z-1})^{q-1}}\,\,x^k \Big]\cr
&=\displaystyle\frac{z}{1-z}\,\,\Big[1+\displaystyle\sum_{k=1}^{q-1}\displaystyle\frac{(z-1)^{q-1-k}\,\,z^{2k}}{(z-1)^{q-1} - z^{2(q-1)}}\,\,x^k \Big]\cr
\end{split}
\end{equation}

Since $F_{n_1}(1,x)=F_{n_2}(1,x)$ for any $x\in \f_{q}\setminus \{\frac{1}{4}\}$ when $n_1, n_2 >0$ and $n_1\equiv n_2 \pmod{q^2-1}$, we have the following for all $x\in \f_{q}\setminus \{\frac{1}{4}\}$. 
\begin{equation}\label{E4.2}
\begin{split}
\displaystyle\sum_{n\geq 0} \,F_{n}\,z^n&= \displaystyle\sum_{n\geq 1} \,F_{n}\,z^n\cr
&=\displaystyle\sum_{n=1}^{q^2-1}\,\, \displaystyle\sum_{l\geq 0} \,F_{n+l(q^2-1)}\,z^{n+l(q^2-1)}\cr
&= \displaystyle\sum_{n=1}^{q^2-1}\,\,F_n\,\, \displaystyle\sum_{l\geq 0}\,z^{n+l(q^2-1)}\cr
&=\displaystyle\frac{1}{1-z^{q^2-1}} \displaystyle\sum_{n=1}^{q^2-1}\,F_n\,z^n
\end{split}
\end{equation}

Combining \eqref{E4.1} and \eqref{E4.2} gives

\[
\begin{split}
\displaystyle\frac{1}{1-z^{q^2-1}} \displaystyle\sum_{n=1}^{q^2-1}\,F_n\,z^n = \displaystyle\frac{z}{1-z}\,\,\Big[1+\displaystyle\sum_{k=1}^{q-1}\displaystyle\frac{(z-1)^{q-1-k}\,\,z^{2k}}{(z-1)^{q-1} - z^{2(q-1)}}\,\,x^k \Big],\,\,\textnormal{for all}\,\,x\in \f_{q}\setminus \{\tfrac{1}{4}\},
\end{split}
\]

i.e.

\[
\begin{split}
\displaystyle\sum_{n=1}^{q^2-1}\,F_n\,z^n = \displaystyle\frac{z\,(z^{q^2-1}-1)}{z-1} +\,\,h(z)\,\,\displaystyle\sum_{k=1}^{q-1}(z-1)^{q-1-k}\,\,z^{2k}\,\,x^k, \,\,\textnormal{for all}\,\,x\in \f_{q}\setminus \{\tfrac{1}{4}\},
\end{split}
\]

where
$$h(z)=\displaystyle\frac{z\,(z^{q^2-1}-1)}{(z-1)\,[(z-1)^{q-1}-z^{2(q-1)}]}.$$

Note that 

\[
\begin{split}
h(z) &=\displaystyle\frac{z\,(z^{q^2-1}-1)}{(z-1)^{q}-z^{2(q-1)}(z-1)}\cr
&= \displaystyle\frac{z\,(z^{q^2-1}-1)}{(1-z^{q-1})\,(z^q-z^{q-1}-1)}\cr
&=  \displaystyle\frac{z\,(z^{q^2}-z)}{(z-z^{q})\,(z^q-z^{q-1}-1)}\cr
&= \displaystyle\frac{z\,(-1-(z-z^q)^{q-1})}{z^q-z^{q-1}-1}\cr
\end{split}
\]

Let $\displaystyle\sum_{k=1}^{q^2-q+1}\,b_kz^k = z \,\,(-1-(z-z^q)^{q-1})$. 

Write $k=\alpha + \beta q$ where $0\leq \alpha, \beta \leq q-1$. Then we have the following.

$$
b_k = \left\{
        \begin{array}{ll}
           (-1)^{\beta +1} \,\binom{q-1}{\beta} &  \textnormal{if}\,\,\alpha +\beta =q,\\[0.3cm]
            -1 &  \textnormal{if}\,\,\alpha +\beta =1, \\[0.3cm]
            0 &  \textnormal{otherwise}.
        \end{array}
    \right.
$$

A computation similar to (4.4) in \cite{Hong-Qin-Zhao-FFA-2016} yields 

\[
\begin{split}
&\displaystyle\sum_{n=1}^{q^2-1}\, \Big(\displaystyle\sum_{a\in \f_q}\,F_n(1, a)\Big) z^n \cr &=\displaystyle\sum_{n=1}^{q^2-1}\,F_n(1, \frac{1}{4})\, z^n - \displaystyle\frac{z(1-z^{q^2-1})}{1-z} - h(z)\,z^{2(q-1)}-h(z)\displaystyle\sum_{j=1}^{q-1}\,(z-1)^{q-1-j}\,z^{2j}\,\Big(\frac{1}{4}\Big)^j,
\end{split}
\]

which implies

\begin{equation}\label{E4.3}
\begin{split}
&\displaystyle\sum_{n=1}^{q^2-1}\, \Big(\displaystyle\sum_{a\in \f_q}\,F_n(1, a)\Big) z^n \cr &=\displaystyle\sum_{n=1}^{q^2-1}\, \frac{n}{2^{n-1}}\, z^n - \displaystyle\frac{z(1-z^{q^2-1})}{1-z} - h(z)\,z^{2(q-1)}-h(z)\displaystyle\sum_{j=1}^{q-1}\,(z-1)^{q-1-j}\,z^{2j}\,\Big(\frac{1}{4}\Big)^j,
\end{split}
\end{equation}

where 

$$h(z)= \displaystyle\frac{1}{z^q-z^{q-1}-1}\,\,\displaystyle\sum_{k=1}^{q^2-q+1}\,b_kz^k.$$

For all integers $1\leq n\leq q^2-1$, define
$$f_n:=\displaystyle\sum_{a\in \f_q} F_n(1,a).$$

Then from \eqref{E4.3}, we have 

\begin{equation}\label{E4.4}
\begin{split}
&(z^q-z^{q-1}-1)\displaystyle\sum_{n=1}^{q^2-1}\, \Big(f_n - \frac{n}{2^{n-1}}\Big) z^n \cr &=(1+z^{q-1}-z^q)\,\displaystyle\sum_{k=1}^{q^2-1}\,z^k\, - \Big(\,z^{2(q-1)}+ \,\displaystyle\sum_{j=1}^{q-1}\,(z-1)^{q-1-j}\,z^{2j}\,\Big(\frac{1}{4}\Big)^j \Big) \,\Big( \displaystyle\sum_{k=1}^{q^2-q+1}\,b_kz^k \Big).
\end{split}
\end{equation}

Let $d_n=f_n - \displaystyle\frac{n}{2^{n-1}}$ and the right hand side of \eqref{E4.4} be $\displaystyle\sum_{k=1}^{q^2+q-1}\,c_kz^k$.

Then we have

\begin{equation}\label{E4.5}
(z^q-z^{q-1}-1)\,\displaystyle\sum_{n=1}^{q^2-1}\, d_n z^n = \displaystyle\sum_{k=1}^{q^2+q-1}\,c_kz^k.
\end{equation}

\begin{prop}\label{P4.1} (See \cite{Hong-Qin-Zhao-FFA-2016}) By comparing the coefficient of $z^i$ on both sides of \eqref{E4.5}, we have the following.\\

\noindent $d_j=-c_j$ if $1\leq j\leq q-1$;\\
$d_q=c_1-c_q$;\\
$d_{lq+j}=d_{(l-1)q+j} - d_{(l-1)q+j+1} - c_{lq+j}$ if $1\leq l\leq q-2$ and $1\leq j\leq q-1$;\\
$d_{lq}=d_{(l-1)q} - d_{(l-1)q+1} - c_{lq}$ if $2\leq l\leq q-2$;\\
$d_{q^2-q+j}=\displaystyle\sum_{i=j}^{q-1}\,c_{q^2+i}$ if $0\leq j\leq q-1$.
\end{prop}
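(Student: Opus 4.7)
The plan is to compare coefficients of $z^k$ on both sides of \eqref{E4.5}. Expanding the left hand side yields
\[
(z^q - z^{q-1} - 1)\sum_{n=1}^{q^2-1} d_n z^n = \sum_{n=1}^{q^2-1} d_n z^{n+q} - \sum_{n=1}^{q^2-1} d_n z^{n+q-1} - \sum_{n=1}^{q^2-1} d_n z^n,
\]
so that, with the convention $d_n = 0$ for $n \le 0$ or $n \ge q^2$, the coefficient of $z^k$ on the left is $d_{k-q} - d_{k-q+1} - d_k$. Matching this against $c_k$ produces the single universal recursion
\[
d_k = d_{k-q} - d_{k-q+1} - c_k, \qquad 1 \le k \le q^2+q-1,
\]
and the five clauses of the proposition are precisely this recursion specialized to five index ranges.

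I would then process the ranges in order. For $1 \le k \le q-1$ both $d_{k-q}$ and $d_{k-q+1}$ vanish by the convention, giving $d_k = -c_k$. At $k = q$ one has $d_0 = 0$ and $d_1 = -c_1$ (from Case 1), so $d_q = c_1 - c_q$. For $q+1 \le k \le q^2 - q - 1$ both shifted indices lie in $[1, q^2-1]$ and the universal recursion holds verbatim; writing $k = lq + j$ with $0 \le j \le q-1$ forces $1 \le l \le q-2$, and separating $j \ne 0$ (Case 3) from $j = 0$ (Case 4, further requiring $l \ge 2$ to avoid the already-handled $k = q$) gives the third and fourth clauses.

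The final range $q^2 - q \le k \le q^2 - 1$ requires a short extra step. For $q^2 \le k \le q^2 + q - 1$ one has $d_k = 0$, so the universal recursion degenerates to the first order difference
\[
d_{k-q} - d_{k-q+1} = c_k.
\]
Setting $k = q^2 + j$ with $0 \le j \le q-1$ and using $d_{q^2} = 0$ as a boundary value, I would telescope downwards from $j = q-1$ to obtain $d_{q^2-q+j} = \sum_{i=j}^{q-1} c_{q^2+i}$, which is the fifth clause. A quick count of the five ranges shows that they partition $\{1, \ldots, q^2-1\}$ exactly, confirming that the system is complete and non-redundant.

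The hard part, such as it is, is purely bookkeeping at the two boundaries: one must apply the convention $d_n = 0$ outside $[1, q^2-1]$ consistently, and check that the cutoff between the forward recursion (Cases 1--4) and the backward telescoping (Case 5) falls at the right place. Once those are pinned down, the proposition is a mechanical transcription of coefficient comparison in \eqref{E4.5}, parallel to the computation in \cite{Hong-Qin-Zhao-FFA-2016}, and no further work is needed.
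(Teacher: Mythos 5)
Your proposal is correct and follows essentially the same route as the paper, which itself offers no argument beyond the coefficient comparison in \eqref{E4.5} (deferring to the analogous computation in the cited reference): the universal recursion $d_k = d_{k-q}-d_{k-q+1}-c_k$ with the convention $d_n=0$ for $n\notin\{1,\dots,q^2-1\}$, specialized to the five index ranges, is exactly what yields the five clauses, and your boundary bookkeeping (in particular the downward telescoping from $d_{q^2}=0$ using the equations for $q^2\le k\le q^2+q-1$) is sound. The only point worth noting is that the equations for $q^2-q\le k\le q^2-1$ go unused and serve as consistency conditions, which your range count implicitly acknowledges.
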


The following theorem is an immediate consequence of Proposition~\ref{P4.1} and the fact that $d_n:=\displaystyle\sum_{a\in \f_q} F_n(1,a)  - \frac{n}{2^{n-1}}$.

\begin{thm}
Let $c_k$ be defined as in \eqref{E4.5} for $1\leq k\leq q^2+q-1$. Then we have the following. \\

$\displaystyle\sum_{a\in \f_q} F_j(1,a)= -c_j + \frac{j}{2^{j-1}}\,\, \textnormal{if}\,\, 1\leq j\leq q-1;$\\

$\displaystyle\sum_{a\in \f_q} F_q(1,a)= c_1-c_q;$

$\displaystyle\sum_{a\in \f_q} F_{lq+j}=\displaystyle\sum_{a\in \f_q} F_{(l-1)q+j} - \displaystyle\sum_{a\in \f_q} F_{(l-1)q+j+1} - c_{lq+j}+\displaystyle\frac{2^q(1-j)+2j}{2^{lq+j}}$ if $1\leq l\leq q-2$ and $1\leq j\leq q-1$;\\

$\displaystyle\sum_{a\in \f_q} F_{lq}=\displaystyle\sum_{a\in \f_q} F_{(l-1)q} - \displaystyle\sum_{a\in \f_q} F_{(l-1)q+1} - c_{lq}+\displaystyle\frac{1}{2^{(l-1)q}}$ if $2\leq l\leq q-2$;\\

$\displaystyle\sum_{a\in \f_q} F_{q^2-q+j}=\displaystyle\sum_{i=j}^{q-1}\,c_{q^2+i}+\frac{j}{2^{q^2-q+j-1}}$ if $0\leq j\leq q-1$.

\end{thm}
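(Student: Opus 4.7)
The plan is to derive each formula directly from the corresponding identity in Proposition~\ref{P4.1} by substituting $d_n = f_n - \frac{n}{2^{n-1}}$ (with $f_n := \sum_{a \in \f_q} F_n(1,a)$) and simplifying the resulting correction terms in $\f_q$, where $q \equiv 0 \pmod{p}$.

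First I would dispatch the two easiest cases. For $1 \leq j \leq q-1$, the relation $d_j = -c_j$ immediately gives $f_j = -c_j + \frac{j}{2^{j-1}}$ by the definition of $d_j$, which is the first claim. For the second claim, $d_q = c_1 - c_q$ yields $f_q = c_1 - c_q + \frac{q}{2^{q-1}}$; since $q = 0$ in $\f_q$, the extra term vanishes.

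For the main recursion for $f_{lq+j}$ with $1 \leq l \leq q-2$ and $1 \leq j \leq q-1$, substituting $d_n = f_n - \frac{n}{2^{n-1}}$ into $d_{lq+j} = d_{(l-1)q+j} - d_{(l-1)q+j+1} - c_{lq+j}$ and rearranging leaves the correction term
$$\frac{lq+j}{2^{lq+j-1}} - \frac{(l-1)q+j}{2^{(l-1)q+j-1}} + \frac{(l-1)q+j+1}{2^{(l-1)q+j}}.$$
Using $lq \equiv (l-1)q \equiv 0$ in $\f_q$ and placing everything over the common denominator $2^{lq+j}$, this collapses to $\frac{2j - 2j\cdot 2^q + (j+1)\cdot 2^q}{2^{lq+j}} = \frac{2^q(1-j) + 2j}{2^{lq+j}}$, exactly as stated. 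An entirely analogous but simpler calculation, with the $j$'s replaced by $0$'s and only the $+\frac{(l-1)q+1}{2^{(l-1)q}}$ term surviving, handles the $f_{lq}$ formula and leaves the correction $\frac{1}{2^{(l-1)q}}$. Finally, for the boundary block $f_{q^2-q+j}$, substitution into $d_{q^2-q+j} = \sum_{i=j}^{q-1} c_{q^2+i}$ adds the correction $\frac{q^2-q+j}{2^{q^2-q+j-1}} = \frac{j}{2^{q^2-q+j-1}}$, since $q^2 - q + j \equiv j \pmod{p}$.

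The only real obstacle is notational bookkeeping: one must keep straight that the integer numerators $lq+j$, $(l-1)q+j$, etc., are being reduced modulo $p$ in $\f_q$ (so every multiple of $q$ vanishes), while the denominators $2^{lq+j-1}$ still represent nontrivial elements of $\f_q$ (so $2^q$ does \emph{not} simplify to $2$). Once this is understood, the theorem is a mechanical line-by-line restatement of Proposition~\ref{P4.1}.
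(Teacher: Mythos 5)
Your proposal is correct and is exactly the paper's route: the paper, too, treats the theorem as an immediate consequence of Proposition~\ref{P4.1} via the substitution $d_n = \sum_{a\in\f_q} F_n(1,a) - \frac{n}{2^{n-1}}$, and your correction-term computations (reducing the integer numerators $lq+j \equiv j$, $(l-1)q+j+1 \equiv j+1$ modulo $p$ and collecting over the common denominator $2^{lq+j}$ to get $\frac{2j - 2j\cdot 2^q + (j+1)\cdot 2^q}{2^{lq+j}} = \frac{2^q(1-j)+2j}{2^{lq+j}}$, with the $j=0$ specialization giving $\frac{1}{2^{(l-1)q}}$) check out in every case. One small quibble that does not affect validity: your closing caution is mathematically false, since every $x\in\f_q$ satisfies $x^q=x$, so in fact $2^q=2$ and the correction terms could be simplified further (e.g.\ $2^{lq+j}=2^{l+j}$ in $\f_q$); the paper simply leaves them unreduced, as you do, so nothing in your derivation depends on the erroneous remark.
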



\section{Acknowledgements}

The author would like to thank G. C. Greubel for pointing out an important connetion of the reversed Dickson polynomials of the third kind to Jacobsthal polynomials and integer sequences. He pointed out the following. 

\begin{enumerate}

\item Proposition 2.5 is related to the Jacobsthal polynomials by $F_{n}(1,x) = J_{n}(-x/2)$. 

\item The generating function presented in Theorem 2.8 is another indicator to the connection to Jacobsthal polynomials. 

\item The third indicator is the reduction presented near the end of Theorem 2.10, namely, $F_{n}(1,\frac{1}{4}) = \displaystyle\frac{n}{2^{n-1}}$.

\item The number set $\{0,1,1,0,-1,-1,0,1,1,..\}$ presented in miscellaneous results, directly before Theorem 3.1, is given as sequence $A010892$ in the On-line Encyclopedia of Integer Sequences. The starting point of this sequence mentioned is offset by one index. An alternate sequence is $A128834$.  

\end{enumerate}


\end{document}